\theoremstyle{definition}
\def \C{{{\rm I{\!\!\!}\rm C}}}
\numberwithin{equation}{section}
\newtheorem{theorem}{\bf Theorem}[section]
\newtheorem{remark}{\bf Remark}[section]
\newtheorem{lemma}{Lemma}[section]
\newtheorem{corollary}{Corollary}[section]
\newtheorem{example}{Example}[section]
\newtheorem{definition}{Definition}[section]
\newtheorem{algorithm}{Algorithm}[section]
\newtheoremstyle
    {remarkstyle}
    {}
    {11pt}
    {}
    {}
    {\bfseries}
    {:}
    {     }
    {\thmname{#1} \thmnumber{#2} }
\theoremstyle{remarkstyle}
\def\E{{\mathbb E}}
\begin{document}
\title{Some Time-changed fractional Poisson processes}
\author{\small A. Maheshwari$^*$ and P. Vellaisamy$^*$
}
\address{\small $^*$Postal address: Department of Mathematics,
Indian Institute of Technology Bombay, Powai, Mumbai 400076, INDIA.}
\email{aditya@math.iitb.ac.in, pv@math.iitb.ac.in}
 \subjclass[2010]{60G22; 60G55}
 \keywords{L\'evy subordinator, fractional Poisson process, simulation.}
\begin{abstract}
In this paper, we study the fractional Poisson process (FPP) time-changed by an independent L\'evy subordinator and the inverse of the L\'evy subordinator, which we call  TCFPP-I and  TCFPP-II, respectively. Various distributional properties of these processes are established. We show that, under certain conditions, the TCFPP-I has the long-range dependence  property and also its law of iterated logarithm is proved.  It is shown that the TCFPP-II is a renewal process and its waiting time distribution is identified. Its bivariate distributions  and also the governing difference-differential equation are derived. Some specific examples for both the processes are
discussed. Finally,  we present the simulations of the sample paths of these processes.
\end{abstract}

\maketitle
\section{Introduction}
Recently, there has been a considerable interest in studying the fractional Poisson process (FPP) $\{N_\beta(t,\lambda)\}_{t\geq 0}$. The early development of the FPP is due \cite{repin,jumarie,lask}. Later, a rich growth of the literature is contributed by \cite{mainardi04,mnv,beghinejp2009,beghin10}. It is proved in \cite{mnv} that the FPP can be seen as the subordination of the Poisson process by an independent inverse $\beta$-stable subordinator, that is,
\begin{equation}\label{FPP-main}
N_{\beta}(t,\lambda)=N(E_\beta(t),\lambda),~t\geq 0,~ 0<\beta<1,
\end{equation}
where $\{N(t,\lambda)\}_{t\geq0}$ is the Poisson process with rate $\lambda>0$ and $\{E_{\beta}(t)\}_{t\geq0}$ is the inverse $\beta$-stable subordinator. The relation between the inverse $\beta$-stable subordinator and the $\beta$-stable subordinator $\{D_\beta(t)\}_{t\geq 0}$ is 
\begin{equation*}
E_{\beta}(t)=\inf\{r\geq 0:D_{\beta}(r)>t\}, ~~~t\geq0,
\end{equation*}
where the Laplace transform (LT) of the $\beta$-stable subordinator is given by $\mathbb{E}[e^{-sD_{\beta}(t)}]=e^{-ts^\beta}$, $s>0$. \cite{Kumar-TCPP} studied the time-changed Poisson process subordinated with the inverse Gaussian, the first-exit time of the inverse Gaussian, the stable and the tempered stable subordinator.  \cite{sfpp} studied the Poisson process subordinated by an independent $\beta$-stable subordinator $\{D_\beta(t)\}_{t\geq 0}$, called the space fractional Poisson process. In \cite{OrsToa-Berns},  studied the Poisson process subordinated with independent L\'evy subordinator  and  \cite{fnbpfp} studied the FPP subordinated with an independent gamma subordinator to obtain the fractional negative binomial process (FNBP). Observe that the L\'evy subordinator covers most of the special subordinators (see \cite[Theorem 1.3.15]{appm}) considered in the literature.  \\\\
The goal of the present work is to study the FPP $\{N_{\beta}(t,\lambda)\}_{t\geq0}$ time-changed by an independent L\'evy subordinator (hereafter referred to as the subordinator) $\{D_{f}(t)\}_{t\geq0}$ with LT (see \cite[Section 1.3.2]{appm})
\begin{equation}\label{subordinator-LT}
\mathbb{E}[e^{-s D_{f}(t)}]=e^{-tf(s)},
\end{equation}
where 
\begin{equation}\label{Bernstein-function}
f(s)=bs+\int_{0}^{\infty}(1-e^{-s x})\nu(dx),~b\geq0,
\end{equation}
is the Bernstein function. 
Here $b$ is the drift coefficient and $\nu$ is a non-negative L\'evy measure on positive half-line such that 
\begin{equation*}
\int_{0}^{\infty}(x\wedge 1)\nu(dx)<\infty.
\end{equation*}
The assumption $\nu(0,\infty)=\infty$ guarantees that the sample paths of $D_{f}(t)$ are almost surely $(a.s.)$  strictly increasing.  \cite{OrsToa-Berns} studied the process $\{N(D_f(t),\lambda)\}_{t\geq0},$ where $\{D_f(t)\}_{t\geq0}$ is the subordinator with drift coefficient $b=0$.
We investigate the process
\begin{equation*} 
\{Q_{\beta}^{f}(t,\lambda)\}=\{N_{\beta}(D_{f}(t),\lambda)\},~t\geq0,
\end{equation*}
where the time variable $t$ is replaced by an independent subordinator $\{D_f(t)\}_{t\geq0}$ and call the time-changed fractional Poisson process version one (TCFPP-I). The probability mass function ({\it pmf}) of TCFPP-I $ \{Q_{\beta}^{f}(t,\lambda)\}_{t\geq0}$ is obtained and its mean  and covariance functions are computed. We discuss the asymptotic behavior of the covariance function for large $t$. Using these results, we prove the long-range dependence (LRD) property for the TCFPP-I process, under certain conditions. The law of iterated logarithm for the TCFPP-I is also proved. \\
The first-exit time of $\{D_f(t)\}_{t\geq0}$ is defined as
\begin{equation*}
E_{f}(t)=\inf\{r\geq 0:D_{f}(r)>t\}, ~~~t\geq0,
\end{equation*}
which is the right-continuous inverse of the subordinator $\{D_f(t)\}_{t\geq 0}$. We consider also the time-changed fractional Poisson process version two (TCFPP-II) defined  by
\begin{equation*} 
\{W_{\beta}^{f}(t,\lambda)\}=\{N_{\beta}(E_{f}(t),\lambda)\},~t\geq0.
\end{equation*}
The {\it pmf}, mean and covariance functions for the TCFPP-II are derived. We also discuss the asymptotic behavior of the mean and variance functions of the TCFPP-II. The bivariate distribution and the difference-differential equation governing the {\it pmf} of the TCFPP-II are derived. 
Lastly, we present the simulations  for some special TCFPP-I and TCFPP-II processes.\\
The paper is organized as follows. In Section \ref{sec:prelims}, we present some preliminary definitions and results. The TCFPP-I and the TCFPP-II processes are investigated in detail in Section  \ref{sec:tcfpp-1} and \ref{sec:tcfpp-2}, respectively. 
In Section \ref{sec:simulation}, we present the simulations for some specific TCFPP-I and TCFPP-II processes.
\section{Preliminaries} \label{sec:prelims}
\noindent In this section, we present some preliminary results which are required later in the paper. \\
\noindent   The Mittag-Leffler function $L_{\alpha}(z)$ is defined as (see \cite{Mittag-Leffler-original})
\begin{equation}\label{Mittag-Leffler-function}
L_{\alpha}(z)=\sum\limits_{k=0}^{\infty}\frac{z^{k}}{\Gamma(\alpha k+1)},\,\,\,\alpha,z\in \C \text{ and } \Re(\alpha)>0.
\end{equation}
The generalized Mittag-Leffler function $L_{\alpha,\beta}^\gamma(z)$ is defined as (see \cite{Mittag-Leffler-general})
\begin{equation}\label{Mittag-Leffler-general}
L_{\alpha,\beta}^\gamma(z)=\sum_{k=0}^{\infty}\frac{\Gamma(\gamma+k)}{\Gamma(\gamma)\Gamma(\alpha k+\beta)}\frac{z^k}{k!},\,\,\,\alpha,\beta,\gamma,z\in \C \text{ and }\Re(\alpha),\Re(\beta),\Re(\gamma)>0.
\end{equation}

\noindent Let $0<\beta\leq1$. The fractional Poisson process (FPP) $\{N_{\beta}(t,\lambda)\}_{t\geq0}$, which is a generalization of the Poisson process $\{N(t,\lambda)\}_{t\geq0}$, is defined to be a stochastic process for which $p_{_\beta}(n|t,\lambda)=\mathbb{P}[N_{\beta}(t,\lambda)=n]$ satisfies (see \cite{lask,mainardi04,mnv})
\begin{flalign}
&&\partial^{\beta}_{t}p_{_{\beta}}(n|t,\lambda) &= -\lambda\left[ p_{_{\beta}}(n|t,\lambda)-p_{_{\beta}}(n-1|t,\lambda)\right],\,\,\,\text{for } n\geq1,& \label{fpp-definition}\\
&&\partial^{\beta}_{t}p_{_{\beta}}(0|t,\lambda) &= -\lambda p_{_{\beta}}(0|t,\lambda)\nonumber,&
\end{flalign}
$\text{with  }p_{_{\beta}}(n|0,\lambda)=1\text{ if }n=0 \text{ and is zero if }n\geq1.$ Here, $\partial^{\beta}_{t}$ denotes the Caputo-fractional derivative defined by
\begin{equation*}
\partial_{t}^{\beta}f(t)= \begin{cases} 
\hfill \dfrac{1}{\Gamma(1-\beta)}\displaystyle\int_{0}^{t}\dfrac{f'(s)}{(t-s)^{\beta}}ds, \hfill    &0<\beta<1 ,\vspace*{0.3cm} \\
\dfrac{d}{dt}f(t), \,\,\,\,\,\,\,\,\,\,\, \beta=1.&
\end{cases}
\end{equation*}
\noindent The {\it pmf} $p_{_{\beta}}(n|t,\lambda)$ of the FPP is given by (see \cite{lask,mnv}) 
\begin{equation}\label{fppd}
p_{_{\beta}}(n|t,\lambda)=\frac{(\lambda t^{\beta})^n}{n!}\sum_{k=0}^{\infty}\frac{(n+k)!}{k!}\frac{(-\lambda
	t^{\beta})^k}{\Gamma(\beta(k+n)+1)}~.
\end{equation}

\noindent  The mean,  variance  and covariance functions (see \cite{lask,LRD2014}) of the FPP are given by 
\begin{align}
\mathbb{E}[N_{\beta}(t,\lambda)] &= qt^{\beta};~   \mbox{Var}[N_{\beta}(t,\lambda)]=q t^{\beta}+Rt^{2\beta}, \label{fppmean} \\
\text{Cov}[N_{\beta}(s,\lambda),N_{\beta}(t,\lambda)]&=qs^{\beta}+ ds^{2\beta}+ q^{2}[\beta t^{2\beta}B(\beta,1+\beta;s/t)-(st)^{\beta}],~0<s\leq t,\label{fpp-cov}
\end{align}
\noindent
where $q=\lambda/\Gamma(1+\beta)$, $R=\frac{\lambda ^{2}}{\beta}\left(\frac{1}{\Gamma(2\beta)}-\frac{1}{\beta\Gamma^{2}(\beta)}\right)>0$,  $d=\beta q^{2}B(\beta, 1+\beta)$, and $B(a,b;x)=\int_{0}^{x}t^{a-1}(1-t)^{b-1}dt,~0<x<1$, is the incomplete beta function.
\section{Time-changed fractional Poisson process-I}\label{sec:tcfpp-1}
\noindent In this section, we consider the FPP time-changed by a subordinator $\{D_f(t)\}_{t\geq0}$, defined in \eqref{subordinator-LT}, for which the moments $\mathbb{E}[D_f^\rho(t)]<\infty$ for all $\rho>0$. 
Note $\{D_f(t)\}_{t\geq0}$ is an increasing process with $D_f(0)=0~a.s.$
\begin{definition}[\bf TCFPP-I]
	The time-changed fractional Poisson process version one (TCFPP-I) is defined as 
	\begin{equation*}
	\{Q_{\beta}^{f}(t,\lambda)\}=\{N_{\beta}(D_{f}(t),\lambda)\},~t\geq0,
	\end{equation*}
	where $\{N_{\beta}(t,\lambda)\}_{t\geq 0}$ is the FPP and is independent of the subordinator $\{D_{f}(t)\}_{t\geq0}$.
\end{definition}
\noindent We suppress the parameter $\lambda$, unless the context requires, associated with the processes $\{N_\beta(t,\lambda)\}_{t\geq0}$ and $\{Q_\beta^f(t,\lambda)\}_{t\geq0}$. 
\begin{theorem}
	The one-dimensional distributions of the TCFPP-I is given by
	\begin{align}\label{pmf-TCFPP-I}
	\mathbb{P}[Q_{\beta}^{f}(t)=n]=\frac{\lambda^n}{n!}\sum_{k=0}^{\infty}\frac{(n+k)!}{k!}\frac{(-\lambda)^k}{\Gamma(\beta(k+n)+1)}\mathbb{E}[D_{f}^{\beta(n+k)}(t)],~n\geq0.
	\end{align}
	\begin{proof}
		\noindent Let $g_{f}(x,t)$ be the {\it pdf} of $D_{f}(t)$. Then, from \eqref{fppd},
		\begin{align*}
		\mathbb{P}[Q_{\beta}^{f}(t)=n]&=\delta_{\beta}^{f}(n|t,\lambda)=\int_{0}^{\infty}p_{\beta}(n|y,\lambda)g_{f}(y,t)dy\\
		&=\int_{0}^{\infty}\frac{(\lambda y^{\beta})^n}{n!}\sum_{k=0}^{\infty}\frac{(n+k)!}{k!}\frac{(-\lambda
			y^{\beta})^k}{\Gamma(\beta(k+n)+1)}g_{f}(y,t)dy\\
		&=\frac{\lambda^n}{n!}\sum_{k=0}^{\infty}\frac{(n+k)!}{k!}\frac{(-\lambda)^k}{\Gamma(\beta(k+n)+1)}\mathbb{E}[D_{f}^{\beta(n+k)}(t)].
		\qedhere
		\end{align*}
	\end{proof}
\end{theorem}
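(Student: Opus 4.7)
The plan is to exploit the subordination structure $Q_\beta^f(t)=N_\beta(D_f(t))$ with $N_\beta$ independent of $D_f$. Since $D_f(t)$ admits a density $g_f(\cdot,t)$ on $[0,\infty)$ (guaranteed by $\nu(0,\infty)=\infty$), conditioning on $D_f(t)$ immediately gives
$$\mathbb{P}[Q_\beta^f(t)=n]=\int_0^\infty p_\beta(n|y,\lambda)\,g_f(y,t)\,dy,$$
where $p_\beta(n|y,\lambda)$ is the FPP pmf at time $y$.

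Next I would substitute the explicit series representation \eqref{fppd} for $p_\beta(n|y,\lambda)$ into this integral, combine the powers of $y$ into a single monomial $y^{\beta(n+k)}$, and interchange the sum over $k$ with the integral over $y$. Recognizing
$$\int_0^\infty y^{\beta(n+k)}\,g_f(y,t)\,dy=\mathbb{E}[D_f^{\beta(n+k)}(t)]$$
then delivers the claimed formula in one stroke.

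The main obstacle is justifying the sum--integral interchange, since the inner series in \eqref{fppd} is alternating in $k$. My strategy is Fubini--Tonelli applied to the series of absolute values, which equals
$$\lambda^n y^{\beta n}\sum_{k=0}^\infty \frac{(n+k)!}{n!\,k!}\frac{(\lambda y^\beta)^k}{\Gamma(\beta(k+n)+1)};$$
this is essentially a generalized Mittag-Leffler function in $\lambda y^\beta$ (cf.\ \eqref{Mittag-Leffler-general}) and is entire in $y$. Integrating termwise against $g_f(y,t)$ produces a series of nonnegative numbers with general term $\tfrac{(n+k)!}{k!\,\Gamma(\beta(k+n)+1)}\lambda^k\,\mathbb{E}[D_f^{\beta(n+k)}(t)]$. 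Under the standing hypothesis that $\mathbb{E}[D_f^\rho(t)]<\infty$ for every $\rho>0$, and using the superexponential decay supplied by $1/\Gamma(\beta(k+n)+1)$ as $k\to\infty$ (together with the factorial estimate $(n+k)!/k!=O(k^n)$), this nonnegative series converges. Tonelli then permits the termwise integration, and Fubini upgrades this to the original signed series, completing the derivation.
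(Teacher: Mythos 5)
Your derivation is the same as the paper's: condition on $D_f(t)$ to write $\mathbb{P}[Q_\beta^f(t)=n]=\int_0^\infty p_\beta(n|y,\lambda)g_f(y,t)\,dy$, substitute the series \eqref{fppd}, integrate termwise, and identify the moments $\mathbb{E}[D_f^{\beta(n+k)}(t)]$. The paper performs the sum--integral interchange without comment, so the only substantive difference is your attempt to justify it by Tonelli.

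That justification, however, overclaims. Finiteness of every individual moment $\mathbb{E}[D_f^\rho(t)]$ does not control the \emph{growth} of the sequence $k\mapsto\mathbb{E}[D_f^{\beta(n+k)}(t)]$, and the factor $1/\Gamma(\beta(n+k)+1)$ need not win against it. Concretely, for the gamma subordinator one has $\mathbb{E}[Y^{\beta(n+k)}(t)]=\Gamma(pt+\beta(n+k))/(\alpha^{\beta(n+k)}\Gamma(pt))$, so the general term of your nonnegative series behaves like
\begin{equation*}
\frac{(n+k)!}{k!}\,\frac{\lambda^k}{\Gamma(\beta(n+k)+1)}\,\frac{\Gamma(pt+\beta(n+k))}{\alpha^{\beta(n+k)}\Gamma(pt)}
\;\asymp\; C\,k^{\,n+pt-1}\left(\frac{\lambda}{\alpha^{\beta}}\right)^{k},
\end{equation*}
since $\Gamma(pt+x)/\Gamma(1+x)\sim x^{pt-1}$. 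This diverges whenever $\lambda\geq\alpha^{\beta}$, so Tonelli does not apply in the generality you assert; absolute convergence (and hence your Fubini step) requires additional hypotheses on the moment growth of $D_f(t)$ relative to $\lambda$. This is a gap the paper itself silently shares, but as written your argument for the interchange does not go through; a correct treatment would either impose an explicit growth condition on $\mathbb{E}[D_f^{\beta r}(t)]$ in $r$, or avoid termwise integration altogether (e.g., by working with the conditional representation and a dominated-convergence argument on partial sums of the bounded function $y\mapsto p_\beta(n|y,\lambda)$, or by interpreting the resulting series in a suitable summability sense).
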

\begin{remark}It can be seen that the {\it pmf } $\delta_{\beta}^{f}(n|t,\lambda)$ satisfies the normalizing condition  $\sum_{n=0}^{\infty}\delta_{\beta}^{f}(n|t,\lambda)=1$. We have 
	\begin{align*}
	\sum_{n=0}^{\infty}\delta_{\beta}^{f}(n|t,\lambda)&=\sum_{n=0}^{\infty}\frac{\lambda^n}{n!}\sum_{k=0}^{\infty}\frac{(n+k)!}{k!}\frac{(-\lambda)^k}{\Gamma(\beta(k+n)+1)}\mathbb{E}[D_{f}^{\beta(n+k)}(t)]\\
	&=\sum_{n=0}^{\infty}\frac{\lambda^n}{n!}\sum_{r=n}^{\infty}\frac{r!}{(r-n)!}\frac{(-\lambda)^{r-n}}{\Gamma(\beta r+1)}\mathbb{E}[D_{f}^{\beta r}(t)]~~~(\text{taking }r=k+n)\\
	&=\sum_{r=0}^{\infty}\sum_{n=0}^{r}\frac{\lambda^n}{n!}\frac{r!}{(r-n)!}\frac{(-\lambda)^{r-n}}{\Gamma(\beta r+1)}\mathbb{E}[D_{f}^{\beta r}(t)]\\
	&=\sum_{r=0}^{\infty}\frac{\lambda^r\mathbb{E}[D_{f}^{\beta r}(t)]}{\Gamma(\beta r+1)}\sum_{n=0}^{r} \binom{r}{n}(-1)^{r-n}=\sum_{r=0}^{\infty}\frac{\lambda^r\mathbb{E}[D_{f}^{\beta r}(t)]}{\Gamma(\beta r+1)}(1-1)^r\\
	&=\frac{\mathbb{E}[1]}{\Gamma(1)}=1.
	\end{align*}
\end{remark}
\noindent We next present some examples of the TCFPP-I processes. 
\begin{example}[Fractional negative binomial process] Let $\{Y(t)\}_{t\geq0}$ be the gamma subordinator, where $Y(t)\sim G(\alpha,pt)$, the gamma distribution with density 
	\begin{equation*}
	f(x|\alpha,pt)=\frac{\alpha^{pt}}{\Gamma(pt)}e^{-\alpha x}x^{pt-1},~x>0,
	\end{equation*}
	where both $\alpha$ and $p$ are positive. It is known that (see \cite[p. 54]{appm})
	\begin{equation*}
	\mathbb{E}[e^{-sY(t)}]=\left(1+\frac{s}{\alpha}\right)^{-pt}=\exp\left(-pt\log\left(1+s/\alpha\right)\right).
	\end{equation*}
	The fractional negative binomial process (FNBP), introduced and studied in detail in \cite{fnbpfp}, is defined by time-changing the FPP by an independent gamma subordinator, that is, 
	\begin{equation*}
	\{Q^{(1)}_{\beta}(t)\}=\{N_{\beta}(Y(t))\},~t\geq0.
	\end{equation*} 
	It is known that (see \cite[eq. (4.4)]{fnbpfp})
	\begin{equation*}
	\mathbb{E}[Y^\rho(t)]=\frac{\Gamma(pt+\rho)}{\alpha^\rho\Gamma(pt)},~~\rho>0.
	\end{equation*}
	From \eqref{pmf-TCFPP-I}, the {\it pmf} of $Q^{(1)}_\beta(t)$ is
	\begin{equation*}
	\mathbb{P}[Q^{(1)}_\beta(t)=n]=\bigg(\frac{\lambda}{\alpha^{\beta}}\bigg)^{n}\frac{1}{n!\Gamma(pt)}\sum\limits_{k=0}^{\infty}\frac{(n+k)!}{k!}\frac{\Gamma{((n+k)\beta+pt)}}{\Gamma{(\beta(n+k)+1)}}\bigg(\frac{-\lambda}{\alpha^{\beta}}\bigg)^{k},~n\geq0,
	\end{equation*}
	which coincides with the {\it pmf} of the FNBP obtained in \cite{fnbpfp}. Also, it holds that\\ $\sum_{n=0}^{\infty}\mathbb{P}[Q^{(1)}_\beta(t)=n]=1.$
\end{example}
\begin{example}[FPP subordinated by tempered $\alpha$-stable subordinator] Let $\{D_{\alpha}^\mu(t)\}_{t\geq0},\mu>0,0<\alpha<1$ be the tempered $\alpha$-stable subordinator with LT 
	\begin{equation*}
	\mathbb{E}[e^{-sD^\mu_\alpha(t)}]=e^{-t\left((\mu+s)^\alpha-\mu^\alpha\right)}.
	\end{equation*}
	The {\it pdf} of the tempered $\alpha$-stable subordinator is given by (see \cite[eq. (2.2)]{ITS-density})
	\begin{equation*}
	g_\mu(x,t)=e^{-\mu x+\mu^\beta t}g(x,t),~x>0,
	\end{equation*}
	where $g(x,t)$ is the {\it pdf} of the $\alpha$-stable subordinator $\{D_\alpha(t)\}_{t\geq 0}$. 
	The FPP time-changed by an independent tempered $\alpha$-stable subordinator is defined as $$\{Q^{(2)}_{\beta}(t)\}=\{N_\beta(D^\mu_\alpha(t))\},~{t\geq0}.$$
	In this case, the {\it pmf} \eqref{pmf-TCFPP-I} reduces to
	\begin{equation*}
	\mathbb{P}[Q_{\beta}^{(2)}(t)=n]=\frac{\lambda^ne^{\mu^\beta t}}{n!}\sum_{k=0}^{\infty}\frac{(n+k)!}{k!}\frac{(-\lambda)^k}{\Gamma(\beta(k+n)+1)}\mathbb{E}[(D_{\alpha}(t))^{\beta(n+k)}e^{-\mu D_\alpha(t)}],~n\geq0.
	\end{equation*}
	It is easy to show that $\sum_{n=0}^{\infty}\mathbb{P}[Q_{\beta}^{(2)}(t)=n]=1.$
\end{example}
\begin{example}[FPP subordinated by inverse Gaussian subordinator] Let $\{G(t)\}_{t\geq0}$ be the inverse Gaussian subordinator with LT (see \cite[Example 1.3.21]{appm})
	\begin{equation*}
	\mathbb{E}[e^{-sG(t)}]=e^{-t\left(\delta(\sqrt{2s+\gamma^2}-\gamma)\right)},~~\delta,\gamma>0.
	\end{equation*}
	The FPP time-changed by an independent inverse Gaussian subordinator is defined as
	$$\{Q^{(3)}_{\beta}(t)\}=\{N_\beta(G(t))\},~{t\geq0}.$$
	It is known that (see \cite{Jorgenson1982,Kumar-TCPP}) the moments of $\{G(t)\}_{t\geq0}$ are given by
	\begin{equation}\label{IG-moments}
	\mathbb{E}[G^q(t)]=\sqrt{\frac{2}{\pi}}\delta\left(\frac{\delta}{\gamma}\right)^{q-1/2}t^{q+1/2}e^{\delta\gamma t}K_{q-1/2}(\delta\gamma t), ~~\delta,\gamma>0,t\geq0,~q\in\mathbb{R},
	\end{equation}
	where $K_\nu(z),z>0$ is the modified Bessel function of third kind with index $\nu\in\mathbb{R}$. We can substitute the moments of $\{G(t)\}_{t\geq0}$ in \eqref{pmf-TCFPP-I} to obtain the {\it pmf} of $\{Q^{(3)}_\beta(t)\}_{t\geq0}$. Moreover, it can be shown that $\sum_{n=0}^{\infty}\mathbb{P}[Q^{(3)}_\beta(t)=n]=1.$
\end{example}
\noindent We next obtain the mean, the variance and the covariance functions of the TCFPP-I.
\begin{theorem}\label{mean-var-cov-tcfpp-1}
	Let $0<s\leq t<\infty,$~$q=\lambda/\Gamma(1+\beta)$ and $d=\beta q^{2}B(\beta, 1+\beta)$. The distributional properties of the TCFPP-I $\{Q_{\beta}^{f}(t,\lambda)\}_{t\geq0}$ are as follows:\\
	(i) $~~\mathbb{E}[Q^{f}_{\beta}(t)]=q\mathbb{E}[D_{f}^{\beta}(t)]$,\\
	(ii)$~~\text{Var}[Q^{f}_{\beta}(t)]=q\mathbb{E}[D_{f}^{\beta}(t)]\left(1-q\mathbb{E}[D_{f}^{\beta}(t)]\right)+2d\mathbb{E}[D_{f}^{2\beta}(t)]$,
	\begin{flalign}
	\text{(iii) }~\text{Cov}[Q^{f}_{\beta}(s),Q^{f}_{\beta}(t)]&=q\mathbb{E}[D_{f}^{\beta}(s)]+ d\mathbb{E}[D_{f}^{2\beta}(s)]-q^{2}\mathbb{E}[D_{f}^{\beta}(s)]\mathbb{E}[D_{f}^{\beta}(t)] &&\nonumber&\\&~~~~~~~~
	+ q^{2}\beta  \mathbb{E}\left[D_{f}^{2\beta}(t)B\left(\beta,1+\beta;\frac{D_{f}(s)}{D_{f}(t)}\right)\right].\nonumber&
	\end{flalign}
\end{theorem}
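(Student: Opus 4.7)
The plan is to prove all three parts by conditioning on the subordinator $\{D_f(t)\}_{t\ge 0}$ and reducing each quantity to a deterministic-time FPP moment, then taking an outer expectation against the law of $D_f$. All relevant FPP moments are already supplied by \eqref{fppmean}--\eqref{fpp-cov}, and the independence of $N_\beta$ and $D_f$, combined with the a.s.\ non-decreasing sample paths and the standing finiteness of $\mathbb{E}[D_f^\rho(t)]$ for every $\rho>0$, will make every interchange of integration and every conditional moment legitimate.

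For (i) I would simply invoke the tower property to obtain $\mathbb{E}[Q_\beta^f(t)]=\mathbb{E}[q D_f^\beta(t)]=q\mathbb{E}[D_f^\beta(t)]$. For (ii) I would apply the law of total variance, giving $\mathbb{E}[\text{Var}[N_\beta(D_f(t))\mid D_f(t)]]+\text{Var}[\mathbb{E}[N_\beta(D_f(t))\mid D_f(t)]]$, and then substitute from \eqref{fppmean} to arrive at $q\mathbb{E}[D_f^\beta(t)]+(R+q^2)\mathbb{E}[D_f^{2\beta}(t)]-q^2(\mathbb{E}[D_f^\beta(t)])^2$. The one genuine algebraic step is the identity $R+q^2=2d$, which I would verify using $q=\lambda/\Gamma(1+\beta)$ together with $\Gamma(1+\beta)=\beta\Gamma(\beta)$, $\Gamma(1+2\beta)=2\beta\Gamma(2\beta)$, and the resulting evaluation $B(\beta,1+\beta)=\Gamma^2(\beta)/(2\Gamma(2\beta))$; both sides collapse to $\lambda^2/(\beta\Gamma(2\beta))$. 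Once this identity is in hand, the stated form $q\mathbb{E}[D_f^\beta(t)]\bigl(1-q\mathbb{E}[D_f^\beta(t)]\bigr)+2d\mathbb{E}[D_f^{2\beta}(t)]$ is an immediate rearrangement.

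For (iii) I would use the conditional covariance decomposition with $\mathcal{G}=\sigma(D_f)$. The first term, $\mathbb{E}[\text{Cov}[N_\beta(D_f(s)),N_\beta(D_f(t))\mid \mathcal{G}]]$, is expanded via \eqref{fpp-cov} applied with $D_f(s),D_f(t)$ in place of $s,t$; this is legitimate because the assumption $\nu(0,\infty)=\infty$ from the introduction yields $D_f(s)<D_f(t)$ a.s.\ for $s<t$, so the ratio $D_f(s)/D_f(t)$ lies in $(0,1)$ and the incomplete beta $B(\beta,1+\beta;\cdot)$ is well-defined. The second term is $\text{Cov}[qD_f^\beta(s),qD_f^\beta(t)]=q^2\mathbb{E}[D_f^\beta(s)D_f^\beta(t)]-q^2\mathbb{E}[D_f^\beta(s)]\mathbb{E}[D_f^\beta(t)]$. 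The main obstacle, really just a careful bookkeeping point, is that the term $-q^2\mathbb{E}[D_f^\beta(s)D_f^\beta(t)]$ coming from $-q^2\mathbb{E}[(D_f(s)D_f(t))^\beta]$ in the conditional covariance must cancel exactly with the $+q^2\mathbb{E}[D_f^\beta(s)D_f^\beta(t)]$ contributed by the second term; once this cancellation is noted, the surviving expression is precisely the claimed formula.
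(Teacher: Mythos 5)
Your proposal is correct, and it rests on the same core idea as the paper: condition on the independent subordinator, apply the deterministic-time FPP moment formulas \eqref{fppmean}--\eqref{fpp-cov}, and take the outer expectation. The differences are organizational rather than substantive. For (iii), the paper first simplifies at the deterministic level, observing that $\mathbb{E}[N_\beta(s)N_\beta(t)]=qs^\beta+ds^{2\beta}+q^2\beta t^{2\beta}B(\beta,1+\beta;s/t)$ (the $-(st)^\beta$ term of \eqref{fpp-cov} is absorbed by adding back the product of means), then conditions and subtracts $q^2\mathbb{E}[D_f^\beta(s)]\mathbb{E}[D_f^\beta(t)]$; you instead use the conditional covariance decomposition and perform the identical cancellation of $\pm q^2\mathbb{E}[D_f^\beta(s)D_f^\beta(t)]$ after taking expectations. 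For (ii), the paper simply sets $s=t$ in (iii), using $\beta q^2 B(\beta,1+\beta)=d$, whereas your law-of-total-variance route is self-contained but obliges you to verify the identity $R+q^2=2d$; your verification via $B(\beta,1+\beta)=\Gamma^2(\beta)/(2\Gamma(2\beta))$ is correct, both sides equalling $\lambda^2/(\beta\Gamma(2\beta))$. Your additional remarks on why the conditioning is legitimate (independence, strict increase of $D_f$ so that $D_f(s)/D_f(t)\in(0,1]$, finiteness of all moments of $D_f(t)$) are points the paper leaves implicit, and they do no harm.
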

\begin{proof}
	Using \eqref{fppmean}, we get 
	\begin{align}\label{tcfpp-mean}
	\mathbb{E}[Q_{\beta}^{f}(t)]&=\mathbb{E}\big[\mathbb{E}[N_{\beta}(D_{f}(t))|D_{f}(t)]\big]=q\mathbb{E}[D_{f}^{\beta}(t)],
	\end{align}
	which proves Part (i). From \eqref{fpp-cov} and \eqref{fppmean}, 
	\begin{equation*}
	\mathbb{E}[N_{\beta}(s)N_{\beta}(t)]=qs^{\beta}+ds^{2\beta}+ q^{2}\beta\left[t^{2\beta}B(\beta,1+\beta;s/t)\right],
	\end{equation*}
	which leads to
	\begin{align}
	\mathbb{E}[Q^{f}_{\beta}(s)Q^{f}_{\beta}(t)]&=\mathbb{E}\left[\mathbb{E}[N_{\beta}(D_{f}(s))N_{\beta}(D_{f}(t))|(D_{f}(s),D_{f}(t))]\right]\nonumber\\
	&=q\mathbb{E}[D_{f}^{\beta}(s)]+ d\mathbb{E}[D_{f}^{2\beta}(s)]+\beta q^{2} \mathbb{E}\left[D_{f}^{2\beta}(t)B\left(\beta,1+\beta;\frac{D_{f}(s)}{D_{f}(t)}\right)\right].\label{bivariate-fnbfp}
	\end{align}
	By  \eqref{tcfpp-mean} and \eqref{bivariate-fnbfp}, Part (iii) follows. Part (ii) follows from Part (iii) by putting $s=t$.
\end{proof}

\noindent{\bf Index of dispersion.} The index of dispersion for a counting process $\{X(t)\}_{t\geq0}$ is defined by (see \cite[p. 72]{cox-lewis})
\begin{equation*}
I(t)=\frac{\text{Var}[X(t)]}{\mathbb{E}[X(t)]}.
\end{equation*}
The stochastic process $\{X(t)\}_{t\geq0}$ is said to be overdispersed if $I(t)>1$ for all $t\geq0$ (see \cite{BegClau14,fnbpfp}). Since the mean of the TCFPP-I $\{Q_{\beta}^{f}(t)\}_{t\geq0}$ is nonnegative, it suffices  to show that Var$[Q_{\beta}^{f}(t)]-\mathbb{E}[Q_{\beta}^{f}(t)]>0$. From Theorem \ref{mean-var-cov-tcfpp-1}, we have that
\begin{align*}
\text{Var}[Q^{f}_{\beta}(t)]-\mathbb{E}[Q^{f}_{\beta}(t)]&=2d\mathbb{E}[D^{2\beta}_{f}(t)]-\big(q\mathbb{E}[D_{f}^{\beta}(t)]\big)^{2}\\
&=\frac{\lambda^{2}}{\beta}\left(\frac{\mathbb{E}[D_{f}^{2\beta}(t)]}{\Gamma(2\beta)}-\frac{(\mathbb{E}[D_{f}^{\beta}(t)])^{2}}{\beta\Gamma^{2}(\beta)}\right)\\
&\geq R\left(\mathbb{E}[D_{f}^{\beta}(t)]\right)^{2},~~(\because \mathbb{E}[D_{f}^{2\beta}(t)]\geq (\mathbb{E}[D_{f}^{\beta}(t)])^{2})
\end{align*}
where $R=\frac{\lambda^2}{\beta}\left(\frac{1}{\Gamma(2\beta)}-\frac{1}{\beta\Gamma^{2}(\beta)}\right)>0$ and $\lambda>0$ for all $\beta\in(0,1)$ (see \cite[Section 3.1]{BegClau14}). Hence, the TCFPP-I exhibits overdispersion.\\

\noindent We next derive the asymptotic expansion for the covariance function of the TCFPP-I process. The following result generalizes Lemma 4.1 of \cite{lrd2016} to the subordinator $\{D_f(t)\}_{t\geq0}$. First, recall the following definition. 
\begin{definition}\label{def-asym}
	Let $f(x)$ and $g(x)$ be positive functions. We say that $f(x)$ is asymptotically equal to $g(x)$, written as $f(x)\sim g(x)$, as $x\to\infty$, if 
	\begin{equation*}
	\lim\limits_{x\rightarrow\infty}\frac{f(x)}{g(x)}=1.
	\end{equation*}
\end{definition}
\begin{theorem}\label{Lemma-asym}
	Let $0<\beta<1$, $0<s \leq t$ and $s$ be fixed. Let $\{D_f(t)\}_{t\geq0}$ be the subordinator with $\mathbb{E}[e^{-sD_f(t)}]=e^{-tf(s)}$, where $f(s)$ is the Bernstein function defined in \eqref{Bernstein-function}. If $\mathbb{E}[D_f^\beta(t)]\rightarrow\infty$ as $t\rightarrow\infty$, then\\
	(i) the asymptotic expansion of $\E\left[D_{f} ^{\beta}(s)D_{f} ^{\beta}(t)\right]$ is 
	\begin{align*}
	\E\left[D_{f} ^{\beta}(s)D_{f} ^{\beta}(t)\right] \sim\E\left[D_{f} ^{\beta}(s)\right]\E\left[D_{f} ^{\beta}(t-s)\right].
	\end{align*}
	(ii) the asymptotic expansion of $\beta \mathbb{E}\left[D_{f} ^{2\beta}(t)B\left(\beta,1+\beta;D_{f} (s)/D_{f} (t)\right)\right]$ is 
	\begin{align*}
	\beta \mathbb{E}\left[D_{f} ^{2\beta}(t)B(\beta,1+\beta;D_{f} (s)/D_{f} (t))\right] \sim\E\left[D_{f} ^{\beta}(s)\right]\E\left[D_{f} ^{\beta}(t-s)\right].
	\end{align*}
	
	\begin{proof} (i) 
		Since the subordinator $\{D_{f} (t)\}_{t\geq0}$ has stationary and independent increments, it suffices to show that
		\begin{equation*}
		\lim\limits_{t\rightarrow\infty}\frac{\E\left[D_{f} ^{\beta}(s)D_{f} ^{\beta}(t)\right]}{\E\left[D_{f} ^{\beta}(s)(D_{f} (t)-D_{f} (s))^\beta\right]}=1.
		\end{equation*}
		Also,  $\{D_{f} (t)\}_{t\geq0}$ is an increasing process with $D_{f} (0)=0~a.s.$ so that
		\begin{align}
		&~~~D_{f} (t)-D_{f} (s)\leq D_{f} (t)~~a.s.\nonumber\\
		\Rightarrow &~~~\E\left[D_{f} ^\beta(s)(D_{f} (t)-D_{f} (s))^{\beta}\right]\leq \E\left[D_{f} ^{\beta}(s)D_{f} ^\beta(t)\right]\nonumber\\
		\Rightarrow &~~~\frac{\E\left[D_{f} ^{\beta}(s)D_{f} ^{\beta}(t)\right]}{\E\left[D_{f} ^{\beta}(s)(D_{f} (t)-D_{f} (s))^\beta\right]}\geq 1.\label{Lemma1oneside}
		\end{align}
		Now consider
		\begin{align}
		\frac{\E\left[D_{f} ^{\beta}(s)D_{f} ^{\beta}(t)\right]}{\E\left[D_{f} ^{\beta}(s)(D_{f} (t)-D_{f} (s))^\beta\right]}\nonumber&=\frac{\E\left[D_{f} ^{\beta}(s)\left\{D_{f} ^{\beta}(t)-(D_{f} (t)-D_{f} (s))^\beta\right\}\right]}{\E\left[D_{f} ^{\beta}(s)(D_{f} (t)-D_{f} (s))^\beta\right]}+1.\\
		\shortintertext{Since $ 0 \leq a^\beta-b^\beta\leq (a-b)^\beta$, for $0<\beta<1$ and $a\geq b\geq 0$,}
		\frac{\E\left[D_{f} ^{\beta}(s)D_{f} ^{\beta}(t)\right]}{\E\left[D_{f} ^{\beta}(s)(D_{f} (t)-D_{f} (s))^\beta\right]}	&\leq \frac{\E\left[D_{f} ^{\beta}(s)\{D_{f} ^{\beta}(t)-(D_{f} ^{\beta}(t)-D_{f} ^\beta(s))\}\right]}{\E\left[D_{f} ^{\beta}(s)D_{f}^\beta (t-s)\right]}+1\nonumber\\
		&=\frac{\E\left[D_{f} ^{2\beta}(s)\right]}{\E\left[D_{f} ^{\beta}(s)D_{f}^\beta (t-s)\right]}+1.\label{Lemma1secondside}
		\end{align}
		From \eqref{Lemma1oneside} and \eqref{Lemma1secondside}, we have that 
		\begin{equation*}
		1\leq \frac{\E\left[D_{f} ^{\beta}(s)D_{f} ^{\beta}(t)\right]}{\E\left[D_{f} ^{\beta}(s)(D_{f} (t)-D_{f} (s))^\beta\right]} \leq \frac{\E\left[D_{f} ^{2\beta}(s)\right]}{\E\left[D_{f} ^{\beta}(s)D^\beta_{f}(t-s)\right]}+1.
		\end{equation*}
		Taking the limit as $t$ tends to infinity in the above equation and using the fact that $\{D_{f} (t)\}_{t\geq0}$ has independent increments, we get
		\begin{align*}
		& 1\leq \lim\limits_{t\rightarrow\infty}\frac{\E\left[D_{f} ^{\beta}(s)D_{f} ^{\beta}(t)\right]}{\E\left[D_{f} ^{\beta}(s)(D_{f} (t)-D_{f} (s))^\beta\right]} \leq 1+\lim\limits_{t\rightarrow\infty}\frac{\E\left[D_{f} ^{2\beta}(s)\right]}{\E\left[D_{f} ^{\beta}(s)\right]\E\left[D_{f} ^\beta(t-s)\right]}\\
		& 1\leq \lim\limits_{t\rightarrow\infty}\frac{\E\left[D_{f} ^{\beta}(s)D_{f} ^{\beta}(t)\right]}{\E\left[D_{f} ^{\beta}(s)(D_{f} (t)-D_{f} (s))^\beta\right]} \leq 1, ~~~ (\text{since }\mathbb{E}[D_f^\beta(t)]\rightarrow\infty\text{ as }t\rightarrow\infty),
		\end{align*}
		which proves   Part (i). \\
		To prove Part (ii), it suffices to show that, in view of Part (i),
		\begin{equation*}
		\lim\limits_{t\rightarrow\infty}\frac{\beta \mathbb{E}\left[D_{f} ^{2\beta}(t)B(\beta,1+\beta;D_{f} (s)/D_{f} (t))\right]}{\E\left[D_{f} ^{\beta}(s)D_{f} ^\beta(t)\right]}=1.
		\end{equation*}
		
		\noindent Note first that 
		\begin{align*}
		B\left(\beta,1+\beta;\frac{D_{f} (s)}{D_{f} (t)}\right)&=\int_{0}^{\frac{D_{f} (s)}{D_{f} (t)}}u^{\beta-1}(1-u)^{\beta}du\nonumber\\
		&\leq \int_{0}^{\frac{D_{f} (s)}{D_{f} (t)}}u^{\beta-1}du~~(\text{since }(1-u)^\beta\leq 1)\nonumber\\
		&=\frac{D_{f} ^\beta(s)}{\beta D_{f} ^\beta(t)},\nonumber
		\end{align*}
		which leads to
		\begin{equation*}
		\beta D^{2\beta}_f(t)	B\left(\beta,1+\beta;\frac{D_{f} (s)}{D_{f} (t)}\right)\leq D^\beta_f(s)D^\beta_f(t).
		\end{equation*}
		Hence,
		\begin{equation}\label{Lemma-it-is-enough-2}
		\frac{\beta \mathbb{E}\left[D_{f} ^{2\beta}(t)B(\beta,1+\beta;D_{f} (s)/D_{f} (t))\right]}{\E\left[D_{f} ^{\beta}(s)D_{f} ^\beta(t)\right]}\leq 1.
		\end{equation}
		On the other hand,
		\begin{align}
		B\left(\beta,1+\beta;D_{f} (s)/D_{f} (t)\right)&=\int_{0}^{\frac{D_{f} (s)}{D_{f} (t)}}u^{\beta-1}(1-u)^{\beta}du\nonumber\\
		&\geq \int_{0}^{\frac{D_{f} (s)}{D_{f} (t)}}u^{\beta-1}(1-u^\beta)du~~(\text{since }(1-u)^\beta\geq 1-u^\beta)\nonumber\\
		&=\frac{1}{\beta}\left(\frac{D_{f} ^\beta(s)}{D_{f} ^\beta(t)}-\frac{D_{f} ^{2\beta}(s)}{2D_{f} ^{2\beta}(t)}\right)\nonumber.
		\end{align}
		This leads to
		\begin{align}
		\frac{\beta \mathbb{E}\left[D_{f} ^{2\beta}(t)B(\beta,1+\beta;D_{f} (s)/D_{f} (t))\right]}{\E\left[D_{f} ^{\beta}(s)D_{f} ^\beta(t)\right]}&\geq \frac{ \mathbb{E}\left[D_{f} ^{2\beta}(t)\left(\frac{D_{f} ^\beta(s)}{D_{f} ^\beta(t)}-\frac{D_{f} ^{2\beta}(s)}{2D_{f} ^{2\beta}(t)}\right)\right]}{\E\left[D_{f} ^{\beta}(s)D_{f} ^\beta(t)\right]}\nonumber\\
		&=\frac{\mathbb{E}\left[D_{f} ^{\beta}(t)D_{f} ^\beta(s)-\dfrac{D_{f} ^{2\beta}(s)}{2}\right]}{\E\left[D_{f} ^{\beta}(s)D_{f} ^\beta(t)\right]}\nonumber\\
		&=1-\frac{\mathbb{E}\left[D_{f} ^{2\beta}(s)\right]}{2\E\left[D_{f} ^{\beta}(s)\right]\E\left[D_{f} ^\beta(t-s)\right]},\label{Lemma-part2-three}
		\end{align}
		using Part (i). By \eqref{Lemma-it-is-enough-2} and \eqref{Lemma-part2-three}, we have that 
		\begin{align*}
		1-&\frac{\mathbb{E}\left[D_{f} ^{2\beta}(s)\right]}{2\E\left[D_{f} ^{\beta}(s)\right]\E\left[D_{f} ^\beta(t-s)\right]}\leq\frac{\beta \mathbb{E}\left[D_{f} ^{2\beta}(t)B(\beta,1+\beta;D_{f} (s)/D_{f} (t))\right]}{\E\left[D_{f} ^{\beta}(s)D_{f} ^\beta(t)\right]}\leq 1.
		\shortintertext{Now taking limit as $t$ tends to infinity in the above inequality, we get}
		1&\leq\lim\limits_{t\to\infty}\frac{\beta \mathbb{E}\left[D_{f} ^{2\beta}(t)B(\beta,1+\beta;D_{f} (s)/D_{f} (t))\right]}{\E\left[D_{f} ^{\beta}(s)D_{f} ^\beta(t)\right]}\leq 1~ (\because \mathbb{E}[D_f^\beta(t)]\rightarrow\infty,\text{ as }t\rightarrow\infty).
		\end{align*}This completes the proof of Part (ii).
	\end{proof}
\end{theorem}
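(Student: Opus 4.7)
The plan is to handle both parts by sandwich arguments that exploit the stationary independent increments of $\{D_f(t)\}_{t\geq 0}$ together with two elementary inequalities: the concavity bound $0 \le a^\beta - b^\beta \le (a-b)^\beta$ valid for $0<\beta<1$ and $a \ge b \ge 0$, and the pair $1-u^\beta \le (1-u)^\beta \le 1$ for $u \in [0,1]$. The finiteness of $\mathbb{E}[D_f^\rho(t)]$ for all $\rho>0$, assumed at the beginning of Section \ref{sec:tcfpp-1}, is the background hypothesis that makes every expectation below well defined.

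For Part (i), the key move is to replace $D_f^\beta(t)$ by the increment $(D_f(t)-D_f(s))^\beta$, because by stationarity and independence the denominator collapses to
\[
\mathbb{E}[D_f^\beta(s)(D_f(t)-D_f(s))^\beta] = \mathbb{E}[D_f^\beta(s)]\,\mathbb{E}[D_f^\beta(t-s)].
\]
So it suffices to show that $\mathbb{E}[D_f^\beta(s)D_f^\beta(t)] / \mathbb{E}[D_f^\beta(s)(D_f(t)-D_f(s))^\beta]$ tends to $1$. The lower bound $\ge 1$ is immediate because $D_f(t) \ge D_f(t)-D_f(s)$ almost surely. For the matching upper bound, the concavity inequality gives $D_f^\beta(t) - (D_f(t)-D_f(s))^\beta \le D_f^\beta(s)$; multiplying by $D_f^\beta(s)$ and taking expectations yields
\[
\mathbb{E}[D_f^\beta(s)D_f^\beta(t)] \le \mathbb{E}[D_f^{2\beta}(s)] + \mathbb{E}[D_f^\beta(s)]\,\mathbb{E}[D_f^\beta(t-s)],
\]
so the ratio is bounded above by $1 + \mathbb{E}[D_f^{2\beta}(s)]/(\mathbb{E}[D_f^\beta(s)]\,\mathbb{E}[D_f^\beta(t-s)])$. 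Since $s$ is fixed and $\mathbb{E}[D_f^\beta(t-s)]\to\infty$ by hypothesis, the correction vanishes and the squeeze is complete.

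For Part (ii), in view of Part (i) it is enough to prove that the ratio $\beta\,\mathbb{E}[D_f^{2\beta}(t)B(\beta,1+\beta;D_f(s)/D_f(t))] / \mathbb{E}[D_f^\beta(s)D_f^\beta(t)]$ converges to $1$. Using $(1-u)^\beta \le 1$ under the beta integral gives $B(\beta,1+\beta;x) \le x^\beta/\beta$, which immediately bounds the ratio above by $1$. For the lower bound, I would use $(1-u)^\beta \ge 1-u^\beta$ and integrate to obtain $B(\beta,1+\beta;x) \ge (x^\beta - x^{2\beta}/2)/\beta$; substituting $x=D_f(s)/D_f(t)$ and multiplying by $\beta D_f^{2\beta}(t)$ leads to
\[
\beta D_f^{2\beta}(t)B(\beta,1+\beta;D_f(s)/D_f(t)) \ge D_f^\beta(s)D_f^\beta(t) - \tfrac12 D_f^{2\beta}(s).
\]
Taking expectations and dividing by $\mathbb{E}[D_f^\beta(s)D_f^\beta(t)]$, which by Part (i) is asymptotically $\mathbb{E}[D_f^\beta(s)]\,\mathbb{E}[D_f^\beta(t-s)] \to \infty$, shows the lower bound tends to $1$ as well. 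The only step I expect to require care is keeping track of which quantities are genuine constants once $s$ is fixed, so that the error terms in both sandwiches are visibly of order $1/\mathbb{E}[D_f^\beta(t-s)]$; everything else is a direct calculation.
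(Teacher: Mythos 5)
Your proposal is correct and follows essentially the same route as the paper's own proof: the same sandwich argument via the increment $(D_f(t)-D_f(s))^\beta$ with stationarity and independence collapsing the denominator, the same subadditivity bound $a^\beta-b^\beta\le (a-b)^\beta$ for the upper estimate in Part (i), and the same pair of bounds $1-u^\beta\le(1-u)^\beta\le1$ on the incomplete beta integrand in Part (ii). No gaps; the error terms are exactly the $O\bigl(1/\mathbb{E}[D_f^\beta(t-s)]\bigr)$ corrections you identify.
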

\begin{remark}\label{example-remark}
	The assumption, in Theorem \ref{Lemma-asym}, that $\mathbb{E}[D_f^\beta(t)]\rightarrow\infty$ as $t\rightarrow\infty$,  is satisfied for the following subordinators.\vspace*{0.15cm}\\
	\noindent(a) Gamma subordinator: It is known (see \cite{fnbpfp}) that
	$\mathbb{E}[Y^\beta(t)]=\frac{\Gamma(pt+\beta)}{\alpha^\beta\Gamma(pt)}\sim \left(\frac{pt}{\alpha}\right)^\beta$, which implies $\mathbb{E}[Y^\beta(t)]\to\infty$ as $t\to\infty$.  \vspace*{0.15cm}\\
	(b) Inverse Gaussian subordinator: The moments of inverse Gaussian subordinator $\{G(t)\}_{t\geq0}$ are given in \eqref{IG-moments}.
	The asymptotic expansion of $K_\nu(z)$, for large $z$, is (see \cite[eq. (A.9)]{Jorgenson1982})
	\begin{align}
	K_\nu(z)&=\sqrt{\frac{\pi}{2}}z^{-1/2}e^{-z}\left(1+\tfrac{\mu-1}{8z}+\tfrac{(\mu-1)(\mu-9)}{2!(8z)^2}+\tfrac{(\mu-1)(\mu-9)(\mu-25)}{3!(8z)^3}+\cdots\right)\nonumber\\
	&=\sqrt{\frac{\pi}{2}}z^{-1/2}e^{-z}\left(1+O\left(\tfrac{1}{z}\right)\right),\label{Bessel-asym}
	\end{align}
	where $\mu=4\nu^2$. For $0<\beta<1$, the asymptotic expansion of \eqref{IG-moments}, as $t\to\infty$, is
	\begin{equation*}
	\mathbb{E}[G^\beta(t)]=\left(\frac{\delta t}{\gamma}\right)^\beta\left(1+O\left(\tfrac{1}{t}\right)\right),~~(\text{using }\eqref{Bessel-asym})
	\end{equation*}
	which implies that $\mathbb{E}[G^\beta(t)]\to\infty$ as $t\to\infty.$
\end{remark}
\noindent We next show that the TCFPP-I, under certain conditions on the subordinator $\{D_f(t)\}_{t\geq0}$, possesses the LRD property.  There are various definitions in the literature for the LRD property of a stochastic process. We now present the definition (see \cite{ovi-lrd,lrd2016}) that  will be used in this paper.
\begin{definition}\label{LRD-definition}
	Let $0<s<t$ and $s$ be fixed. Assume a stochastic process $\{X(t)\}_{t\geq0}$ has the correlation function Corr$[X(s),X(t)]$ that satisfies
	\begin{equation*}
	c_1(s)t^{-d}\leq\text{Corr}[X(s),X(t)]\leq c_2(s)t^{-d},
	\end{equation*}
	for large $t$, $d>0$, $c_1(s)>0$ and $c_2(s)>0$. That is, 
	\begin{equation}\label{LRD-defn2}
	\lim\limits_{t\to\infty}\frac{\text{Corr}[X(s),X(t)]}{t^{-d}}=c(s),
	\end{equation}for some $c(s)>0$ and $d>0.$  We say $\{X(t)\}_{t\geq0}$ has the long-range dependence (LRD) property if $d\in(0,1)$  and has the short-range dependence (SRD) property if $d\in(1,2)$.
\end{definition}
\noindent Note \eqref{LRD-defn2} implies that Corr$[X(s),X(t)]$ behaves like $t^{-d}$, for large $t$.

\begin{theorem}\label{LRD-theorem}Let $\{D_{f}(t)\}_{t\geq0}$ be such that  $\mathbb{E}[D_f^\beta(t)]\sim k_1t^\rho$ and $\mathbb{E}[D_f^{2\beta}(t)]\sim k_2t^{2\rho}, $ for some  $0< \rho <1,$ and positive  constants $k_1$ and $k_2$ with $k_2\geq k_1^2$. Then the TCFPP-I $\{Q^f_{\beta}(t)\}_{t\geq 0}$ has the LRD property.
	\begin{proof}\noindent Consider the last term of $\text{Cov}[Q^f_{\beta}(s),Q_{\beta}^f(t)]$ given in Theorem \ref{mean-var-cov-tcfpp-1} (iii), namely,
		\begin{equation*}
		\beta q^{2} \mathbb{E}\left[D_f^{2\beta}(t)B\left(\beta,1+\beta;\frac{D_f(s)}{D_f(t)}\right)\right].
		\end{equation*}
		Using Theorem \ref{Lemma-asym} (ii), we get for large $t$,
		\begin{align}
		q^{2}\beta \mathbb{E}[D_f^{2\beta}(t)B\left(\beta,1+\beta;\frac{D_f(s)}{D_f(t)}\right)
		&\sim q^{2}\E[D_f^{\beta}(s)]\E[D_f^{\beta}(t-s)]\label{autocovariance-last-summand-1}.
		\end{align}
		\noindent Using \eqref{autocovariance-last-summand-1}, Theorem \ref{mean-var-cov-tcfpp-1} (iii) and $\mathbb{E}[D_f^\beta(t)]\sim k_1t^\rho$, we get, 
		\begin{align}
		\text{Cov}[Q^f_{\beta}(s),Q^f_{\beta}(t)]&\sim q\mathbb{E}[D_f^{\beta}(s)]+d\mathbb{E}[D_f^{2\beta}(s)]-q^{2}\mathbb{E}[D_f^{\beta}(s)]k_1t^\rho+q^{2}\mathbb{E}[D_f^{\beta}(s)]k_1(t-s)^\rho\nonumber\\
		&=q\mathbb{E}[D_f^{\beta}(s)]+d\mathbb{E}[D_f^{2\beta}(s)]-q^{2}\mathbb{E}[D_f^{\beta}(s)]k_1(t^\rho-(t-s)^\rho)\nonumber\\
		&\sim q\mathbb{E}[D_f^{\beta}(s)]+d\mathbb{E}[D_f^{2\beta}(s)]-q^{2}k_1s\rho \mathbb{E}[D_f^{\beta}(s)]t^{\rho-1},\label{covariance-large-t}
		\end{align}
		since $t^{\rho}-(t-s)^{\rho} \sim \rho s t^{\rho-1}$ for large $t$, and $\rho>0.$
		
		\noindent Similarly, from Theorem \ref{mean-var-cov-tcfpp-1} (ii) and $\mathbb{E}[D_f^{2\beta}(t)]\sim k_2t^{2\rho}$, we have that
		\begin{align}
		\text{Var}[Q^f_{\beta}(t)]&\sim qk_1t^\rho- q^{2}(k_1t^\rho )^2+2dk_2t^{2\rho}\nonumber\\
		&\sim 2dk_2t^{2\rho}- q^{2}k_1^2t^{2\rho}~~(\text{see Definition }\ref{def-asym})\nonumber\\
		&=  \frac{\lambda^2}{\beta}\left(\frac{k_2}{\Gamma(2\beta)}-\frac{k_1^2}{\beta\Gamma^2(\beta)}\right)t^{2\rho}\nonumber\\
		&=d_1t^{2\rho},\label{variance-large-t}
		\end{align}
		where $d_{1}=\frac{\lambda^2}{\beta}\left(\frac{k_2}{\Gamma(2\beta)}-\frac{k_1^2}{\beta\Gamma^2(\beta)}\right)$. 
		\noindent Thus, from \eqref{covariance-large-t} and \eqref{variance-large-t}, we have for large $t>s$,
		\begin{align}
		\text{Corr}[Q^f_{\beta}(s),Q^f_{\beta}(t)]&\sim\frac{q\mathbb{E}[D_f^{\beta}(s)]+d\mathbb{E}[D_f^{2\beta}(s)]-q^{2}k_1s\rho \mathbb{E}[D_f^{\beta}(s)]t^{\rho-1}}{\sqrt{\text{Var}[Q^f_{\beta}(s)]}\sqrt{d_1t^{2\rho}}}\nonumber\\
		&= \left(\frac{q\mathbb{E}[D_f^{\beta}(s)]+d\mathbb{E}[D_f^{2\beta}(s)]}{\sqrt{d_{1}\text{Var}[Q^f_{\beta}(s)]}}\right)t^{-\rho}-\frac{q^{2}k_1s\rho \mathbb{E}[D_f^{\beta}(s)]}{\sqrt{t^{2\rho}d_{1}}\sqrt{\text{Var}[Q^f_{\beta}(s)]}}t^{-1}\nonumber\\
		&\sim \left(\frac{q\mathbb{E}[D_f^{\beta}(s)]+d\mathbb{E}[D_f^{2\beta}(s)]}{\sqrt{d_{1}\text{Var}[Q^f_{\beta}(s)]}}\right)t^{-\rho}\nonumber,  		
		\end{align}
		\noindent which decays like the power law $t^{-\rho},~0<\rho<1$. Hence, the TCFPP-I exhibits the LRD property.\end{proof}
\end{theorem}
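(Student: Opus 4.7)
The plan is to read off the correlation function $\mathrm{Corr}[Q^f_\beta(s),Q^f_\beta(t)]$ directly from the covariance formula in Theorem \ref{mean-var-cov-tcfpp-1}(iii), combined with the asymptotic simplification of the awkward beta-function term supplied by Theorem \ref{Lemma-asym}(ii). Since $s$ is kept fixed, all moments of $D_f(s)$ are constants, so the whole task reduces to tracking the $t$-dependence as $t\to\infty$.

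First, I would replace the term $\beta q^2\mathbb{E}\bigl[D_f^{2\beta}(t)B(\beta,1+\beta;D_f(s)/D_f(t))\bigr]$ in the covariance by its asymptotic equivalent $q^2 \mathbb{E}[D_f^\beta(s)]\,\mathbb{E}[D_f^\beta(t-s)]$ from Theorem \ref{Lemma-asym}(ii), which applies because the hypothesis $\mathbb{E}[D_f^\beta(t)]\sim k_1 t^\rho$ with $\rho>0$ ensures $\mathbb{E}[D_f^\beta(t)]\to\infty$. Substituting this into Theorem \ref{mean-var-cov-tcfpp-1}(iii) gives
\[
\mathrm{Cov}[Q^f_\beta(s),Q^f_\beta(t)] \sim q\mathbb{E}[D_f^\beta(s)]+d\mathbb{E}[D_f^{2\beta}(s)]-q^2\mathbb{E}[D_f^\beta(s)]\bigl(\mathbb{E}[D_f^\beta(t)]-\mathbb{E}[D_f^\beta(t-s)]\bigr).
\]
Using $\mathbb{E}[D_f^\beta(t)]\sim k_1 t^\rho$ and the elementary asymptotic $t^\rho-(t-s)^\rho\sim \rho s\, t^{\rho-1}$ (valid since $s$ is fixed and $0<\rho<1$), the last summand behaves like $q^2 k_1 \rho s\,\mathbb{E}[D_f^\beta(s)]\,t^{\rho-1}$, which vanishes as $t\to\infty$. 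Hence the covariance tends to the positive constant $C(s) := q\mathbb{E}[D_f^\beta(s)]+d\mathbb{E}[D_f^{2\beta}(s)]$.

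Next I would compute the variance of $Q^f_\beta(t)$ asymptotically from Theorem \ref{mean-var-cov-tcfpp-1}(ii): using $\mathbb{E}[D_f^\beta(t)]\sim k_1 t^\rho$ and $\mathbb{E}[D_f^{2\beta}(t)]\sim k_2 t^{2\rho}$, the linear $t^\rho$ contribution is negligible against the $t^{2\rho}$ contributions, so
\[
\mathrm{Var}[Q^f_\beta(t)] \sim \bigl(2d k_2 - q^2 k_1^2\bigr) t^{2\rho} = d_1 t^{2\rho},
\]
where $d_1 = \tfrac{\lambda^2}{\beta}\bigl(\tfrac{k_2}{\Gamma(2\beta)}-\tfrac{k_1^2}{\beta\Gamma^2(\beta)}\bigr)$; the hypothesis $k_2\geq k_1^2$ (together with $\Gamma(2\beta)<\beta\Gamma^2(\beta)$ is \emph{not} what we need here; rather the usual FPP inequality combined with $k_2\geq k_1^2$ guarantees $d_1>0$, cf.\ the overdispersion computation earlier). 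Dividing, I obtain
\[
\mathrm{Corr}[Q^f_\beta(s),Q^f_\beta(t)] \sim \frac{C(s)}{\sqrt{d_1\,\mathrm{Var}[Q^f_\beta(s)]}}\,t^{-\rho},
\]
which matches Definition \ref{LRD-definition} with $d=\rho\in(0,1)$, establishing the LRD property.

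The main technical worry is the step eliminating the leading-order $t^\rho$ contributions in the covariance: both $q^2\mathbb{E}[D_f^\beta(s)]\mathbb{E}[D_f^\beta(t)]$ and the beta-function expectation grow like $t^\rho$, and one must be sure that the Theorem \ref{Lemma-asym}(ii) approximation is sharp enough to cancel these leading terms and leave only the $t^{\rho-1}$ correction. This is handled cleanly by rewriting the difference as $\mathbb{E}[D_f^\beta(t)]-\mathbb{E}[D_f^\beta(t-s)]$ using stationary increments of the subordinator, then invoking $t^\rho-(t-s)^\rho\sim\rho s t^{\rho-1}$; the remaining routine substitution yields the power-law decay $t^{-\rho}$ with exponent in $(0,1)$.
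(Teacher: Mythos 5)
Your proposal is correct and follows essentially the same route as the paper's own proof: invoke Theorem \ref{Lemma-asym}(ii) to replace the incomplete-beta term, use $t^{\rho}-(t-s)^{\rho}\sim\rho s\,t^{\rho-1}$ to show the covariance settles to the constant $q\mathbb{E}[D_f^{\beta}(s)]+d\mathbb{E}[D_f^{2\beta}(s)]$, extract $\mathrm{Var}[Q^f_{\beta}(t)]\sim d_1 t^{2\rho}$, and divide to get decay like $t^{-\rho}$ with $\rho\in(0,1)$. Your added remark that $k_2\geq k_1^2$ together with the overdispersion inequality $\Gamma(2\beta)<\beta\Gamma^{2}(\beta)$ guarantees $d_1>0$ is a point the paper leaves implicit, and is a welcome clarification.
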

\begin{remark}
	From Remark \ref{example-remark}, it can be seen that moments of the gamma subordinator has the asymptotic expansion  $\mathbb{E}[Y^\beta(t)]\sim (p/\alpha)^\beta t^\beta$ and $\mathbb{E}[Y^{2\beta}(t)]\sim \left(p/\alpha\right)^{2\beta} t^{2\beta}$. Therefore, the FNBP $\{Q^{(1)}_\beta(t)\}_{t\geq0}$ exhibits the LRD property. 
	Similarly, for the inverse Gaussian subordinator $\{G(t)\}_{t\geq0}$, we have  the asymptotic expansion $\mathbb{E}[G^\beta(t)]\sim \left(\delta /\gamma\right)^\beta t^\beta$ and $\mathbb{E}[G^{2\beta}(t)]\sim \left(\delta /\gamma\right)^{2\beta} t^{2\beta}$. Hence, $\{Q^{(3)}_\beta(t)\}_{t\geq0}$ also has the LRD property.
	
\end{remark}
\begin{definition}
	We call a function $l:(0,\infty)\rightarrow(0,\infty)$ regularly varying at 0+ with index $\alpha\in\mathbb{R}$ (see \cite{bertoin}) if 
	$$\lim_{x\rightarrow 0+}\frac{l(\lambda x)}{l(x)}=\lambda^\alpha,~\text{for}~ \lambda>0.$$
\end{definition}
\noindent   We first reproduce the following law of iterated logarithm (LIL) for the subordinator from \cite[Chapter III, Theorem 14]{bertoin}.
\begin{lemma}Let $X(t)$ be a subordinator with $\mathbb{E}[e^{-sX(t)}]=e^{-tf(s)}$, where $f(s)$ is regularly varying at $0+$ with index $\alpha\in(0,1)$. Let $h$ be the inverse function of $f$ and
	\begin{equation*}
	g(t)=\frac{\log\log t}{h(t^{-1}\log\log t)},~(e<t).
	\end{equation*}
	Then
	\begin{equation}\label{LIL-sub}
	\liminf_{t\to\infty}\frac{X(t)}{g(t)}=\alpha(1-\alpha)^{(1-\alpha)/\alpha},~~a.s.
	\end{equation}
\end{lemma}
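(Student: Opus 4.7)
The plan is to follow the classical proof scheme of Bertoin's Theorem III.14: convert the Laplace-transform hypothesis into a sharp exponential bound on the lower tail of $X(t)$, and then combine it with the two Borel–Cantelli lemmas along carefully chosen subsequences. Since the statement is cited verbatim from Bertoin, the real task is to verify that the argument carries over without modification to the Bernstein-function setting of \eqref{Bernstein-function}. Starting from the exponential Markov inequality
\begin{equation*}
\mathbb{P}(X(t)\leq x)\leq \mathbb{E}[e^{s(x-X(t))}]=\exp(sx-tf(s)),\qquad s>0,
\end{equation*}
I would minimize the exponent at $x=c\,g(t)$. Writing $u=\log\log t$ and inserting the trial point $s=\lambda\,h(u/t)$, the relation $h\circ f=\mathrm{id}$ and regular variation of $f$ with index $\alpha$ give $tf(\lambda h(u/t))\sim\lambda^\alpha u$ and $sx=c\lambda u$, so the exponent is $(c\lambda-\lambda^\alpha)u(1+o(1))$. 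Optimizing over $\lambda>0$ yields $\inf_\lambda(c\lambda-\lambda^\alpha)=-I(c)$ with $I(c)=(1-\alpha)(\alpha/c)^{\alpha/(1-\alpha)}$, and a direct check shows $I(c^{\ast})=1$ at $c^{\ast}=\alpha(1-\alpha)^{(1-\alpha)/\alpha}$, with $I(c)>1$ for $c<c^{\ast}$.

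For the lower half $\liminf_{t\to\infty}X(t)/g(t)\geq c^{\ast}$, fix $c<c^{\ast}$ and take the geometric subsequence $t_n=\theta^n$ with $\theta>1$. The exponential estimate gives
\begin{equation*}
\sum_n\mathbb{P}(X(t_n)\leq c\,g(t_n))\leq\sum_n(\log t_n)^{-I(c)}<\infty,
\end{equation*}
so the first Borel–Cantelli lemma yields $X(t_n)>c\,g(t_n)$ eventually a.s. Because $X$ is a.s.\ nondecreasing and $g$ is regularly varying of index $1/\alpha$, the oscillation on $[t_n,t_{n+1}]$ is controlled by a factor $\theta^{1/\alpha}(1+o(1))$, giving $\liminf_t X(t)/g(t)\geq c/\theta^{1/\alpha}$ a.s.; letting $c\uparrow c^{\ast}$ and $\theta\downarrow 1$ closes this half.

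For the matching upper half $\liminf_{t\to\infty}X(t)/g(t)\leq c^{\ast}$, I would choose a rapidly growing subsequence $t_n$ (say $t_n=\exp(n^n)$) arranged so that $X(t_{n-1})=o(g(t_n))$ a.s., and set $Y_n=X(t_n)-X(t_{n-1})\stackrel{d}{=}X(t_n-t_{n-1})$. The $Y_n$ are independent by the stationarity and independence of increments of $X$, and a matching lower exponential estimate on $\mathbb{P}(Y_n\leq c\,g(t_n))$ for $c>c^{\ast}$ (obtained by the same Laplace-transform calculation, now applied in the reverse direction) produces a divergent probability sum. The second Borel–Cantelli lemma then yields $Y_n\leq c\,g(t_n)$ infinitely often, and hence $X(t_n)\leq 2c\,g(t_n)$ i.o., so $\liminf X(t)/g(t)\leq 2c$; sending $c\downarrow c^{\ast}/2$ and refining the constant via $X(t_{n-1})/g(t_n)\to 0$ yields the claim. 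The main obstacle in the whole argument is the sharp constant tracking in the optimization over $\lambda$: handling the slowly varying correction to $f$ uniformly in $\lambda$ on the relevant range requires Potter's bounds (equivalently, de Bruijn's Tauberian theorem), which is the technical core of Bertoin's proof and the only nontrivial step to transcribe.
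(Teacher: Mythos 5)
The paper does not prove this lemma at all: it is reproduced verbatim from Bertoin, \emph{L\'evy Processes}, Chapter III, Theorem 14, and is used as a black box in the subsequent proof of the law of the iterated logarithm for the TCFPP-I. So there is no in-paper argument to compare against; what you have written is a reconstruction of Bertoin's (originally Fristedt's) proof, and as a reconstruction it is essentially sound. The lower-half of the LIL is correct and complete in outline: the Chernoff bound $\mathbb{P}(X(t)\leq x)\leq\exp(sx-tf(s))$, the substitution $s=\lambda h(u/t)$ with $u=\log\log t$, the rate function $I(c)=(1-\alpha)(\alpha/c)^{\alpha/(1-\alpha)}$ with $I(c^{\ast})=1$ at $c^{\ast}=\alpha(1-\alpha)^{(1-\alpha)/\alpha}$, the first Borel--Cantelli lemma along $t_n=\theta^n$, and the interpolation using monotonicity of $X$ together with regular variation of $g$ with index $1/\alpha$ all check out. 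Two points in the other half deserve flagging. First, the phrase ``the same Laplace-transform calculation, now applied in the reverse direction'' is misleading: Markov's inequality only yields the upper bound on $\mathbb{P}(X(t)\leq x)$, and the matching lower bound $\mathbb{P}(X(t)\leq c\,g(t))\geq(\log t)^{-I(c)(1+o(1))}$ for $c>c^{\ast}$ is genuinely the hard step, requiring de Bruijn's exponential Tauberian theorem; you do name this at the very end, but it is the core of the second half rather than a transcription detail. Second, the final bookkeeping is garbled: from $Y_n\leq c\,g(t_n)$ infinitely often and $X(t_{n-1})=o(g(t_n))$ a.s.\ one gets $\liminf_n X(t_n)/g(t_n)\leq c$ directly and then lets $c\downarrow c^{\ast}$; the detour through ``$X(t_n)\leq 2c\,g(t_n)$'' and ``sending $c\downarrow c^{\ast}/2$'' is both unnecessary and impossible, since divergence of $\sum_n\mathbb{P}(Y_n\leq c\,g(t_n))$ requires $c>c^{\ast}$. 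With those repairs your sketch is a faithful account of the proof the paper cites.
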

\noindent We next prove the LIL for the TCFPP-I.
\begin{theorem}[Law of iterated logarithm]
	Let the Laplace exponent $f(s)$ of the subordinator $\{D_{f}(t)\}_{t\geq0}$ be regularly varying at 0+ with index $\alpha\in(0,1)$. Then, for $ 0<\beta< 1$, 
	\begin{equation*}
	\liminf_{t\rightarrow\infty}\frac{Q_{\beta}^{f}(t)}{(g(t))^{\beta}}=\lambda E_{\beta}(1)\alpha^\beta\left(1-\alpha\right)^{\beta(1-\alpha)/\alpha}~~{a.s.},
	\end{equation*}
	where 
	\begin{equation*}
	g(t)=\frac{\log\log t}{f^{-1}(t^{-1}\log\log t)}~(t>e),
	\end{equation*}
	and $E_\beta(t)$ is the inverse $\beta$-stable subordinator.
	\begin{proof}
		Since  $E_{\beta}(t)\stackrel{d}{=}t^{\beta}E_{\beta}(1)$ (see \cite[Proposition 3.1]{MeerSche2004}), we have
		\begin{equation*}
		Q_{\beta}^{f}(t)=N(E_{\beta}(D_{f}(t)))\stackrel{d}{=}N((D_{f}(t))^{\beta}E_{\beta}(1)).
		\end{equation*}
		The law of large numbers for the Poisson process implies
		\begin{equation*}
		\lim_{t\rightarrow\infty}\frac{N(t)}{t}=\lambda,~{a.s.}
		\end{equation*} Note that $D_f(t)\to\infty$, $a.s.$ as $t\to\infty$ (see  \cite[page 73]{bertoin}). Consider now,
		\begin{align*}
		\liminf_{t\rightarrow\infty}\frac{Q_{\beta}^{f}(t)}{(g(t))^{\beta}}&=\liminf_{t\rightarrow\infty}\frac{N(E_{\beta}(D_{f}(t)))}{(g(t))^{\beta}}=\liminf_{t\rightarrow\infty}\frac{N(D^{\beta}_{f}(t)E_{\beta}(1))}{(g(t))^{\beta}}\\
		&=\liminf_{t\rightarrow\infty}\frac{N(D^{\beta}_{f}(t)E_{\beta}(1))}{D^{\beta}_{f}(t)E_{\beta}(1)}\frac{D^{\beta}_{f}(t)E_{\beta}(1)}{(g(t))^{\beta}}\\
		&=\lambda E_{\beta}(1)\liminf_{t\rightarrow\infty}\frac{D^{\beta}_{f}(t)}{(g(t))^{\beta}},~{a.s.}\\
		&=\lambda E_{\beta}(1)\left(\liminf_{t\rightarrow\infty}\frac{D_{f}(t)}{g(t)}\right)^{\beta},~{a.s.}\\
		&=\lambda E_{\beta}(1)\alpha^\beta\left(1-\alpha\right)^{\beta(1-\alpha)/\alpha}~{a.s.},
		\end{align*}
		where the last step follows from \eqref{LIL-sub}.
	\end{proof}
	\noindent When $\beta=1$, the LIL for the time-changed Poisson process $\{Q_1^f(t)\}\stackrel{d}{=}\{N_1(D_f(t))\},~t\geq0,$ (discussed in \cite{OrsToa-Berns}) can be proved in a similar way and is stated below.
	\begin{corollary}\label{LIL-Coro}
		Let the Laplace exponent $f(s)$ of the subordinator $\{D_{f}(t)\}_{t\geq0}$ be regularly varying at 0+ with index $\alpha\in(0,1)$. Then 
		\begin{equation}\label{LIL-tcpp}
		\liminf_{t\rightarrow\infty}\frac{Q_1^{f}(t)}{g(t)}=\lambda \alpha\left(1-\alpha\right)^{(1-\alpha)/\alpha}~~{a.s.},
		\end{equation}
		where 
		\begin{equation*}
		g(t)=\frac{\log\log t}{f^{-1}(t^{-1}\log\log t)}~(t>e).
		\end{equation*}
	\end{corollary}
\end{theorem}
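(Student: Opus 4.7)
The plan is to mimic the proof of the preceding theorem with the simplification $\beta=1$, which eliminates the inverse stable subordinator from the time change. Since $N_1 = N$ is the ordinary Poisson process and $E_1(t)=t$, we have $Q_1^f(t)=N(D_f(t))$, so no factor of $E_\beta(1)$ appears in the final constant and the exponent $\beta$ on $(g(t))^\beta$ collapses to $1$.

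The key step is the decomposition
\begin{equation*}
\frac{Q_1^f(t)}{g(t)} = \frac{N(D_f(t))}{D_f(t)}\cdot\frac{D_f(t)}{g(t)}.
\end{equation*}
Since $f$ is regularly varying at $0+$ with index $\alpha\in(0,1)$, the subordinator drifts to $+\infty$ almost surely, that is, $D_f(t)\to\infty~{a.s.}$ as $t\to\infty$ (cf.\ \cite[p.~73]{bertoin}). The strong law of large numbers for the Poisson process then gives
\begin{equation*}
\lim_{t\to\infty}\frac{N(D_f(t))}{D_f(t)} = \lambda,~{a.s.},
\end{equation*}
and because this limit is a positive constant, the liminf of the product equals $\lambda$ times the liminf of the second factor.

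Finally, I would apply the lemma quoted from \cite{bertoin} (equation \eqref{LIL-sub}) to $\{D_f(t)\}_{t\ge 0}$ to obtain
\begin{equation*}
\liminf_{t\to\infty}\frac{D_f(t)}{g(t)} = \alpha(1-\alpha)^{(1-\alpha)/\alpha},~{a.s.},
\end{equation*}
and multiplication by $\lambda$ yields \eqref{LIL-tcpp}. There is no substantive obstacle; the only subtlety is the elementary fact that $\liminf_t (a_t b_t) = (\lim_t a_t)(\liminf_t b_t)$ whenever $\lim_t a_t$ exists and is a positive constant. In effect, the argument used in the $\beta<1$ theorem above transfers verbatim, with $E_\beta(1)$ replaced by $1$ and the exponent $\beta$ absent throughout.
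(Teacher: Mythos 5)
Your proposal is correct and follows essentially the same route as the paper: the paper proves the $0<\beta<1$ case via the identical decomposition $N(\cdot)/(\text{time change})\times(\text{time change})/g(t)$, the strong law for the Poisson process, and Bertoin's LIL \eqref{LIL-sub} for $D_f$, and then states that the $\beta=1$ corollary follows "in a similar way" --- which is precisely the specialization you carry out. Your observation that the positive limit $\lambda$ factors out of the $\liminf$ is the same (implicit) step the paper uses, so there is nothing further to add.
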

\begin{example}
	The space fractional Poisson process, introduced in \cite{sfpp}, defined by time changing the Poisson process by an independent $\alpha$-stable subordinator, that is,
	\begin{equation*}
	\widetilde{N}_\alpha(t)=N(D_\alpha (t)),~t\geq0,~0<\alpha<1,
	\end{equation*} 
	where $\{D_\alpha(t)\}_{t\geq 0}$ is the $\alpha$-stable subordinator with LT $\mathbb{E}[e^{-sD_\alpha (t)}]=e^{-ts^\alpha}$. Here, the corresponding Bernstein function $f(s)=s^\alpha$ is regularly varying with index $\alpha\in(0,1)$. Therefore, by Corollary \ref{LIL-Coro}, we have the LIL for the space fractional Poisson process with 
	\begin{equation*}
	g(t)=\frac{\log\log t}{(t^{-1}\log\log t)^{1/\alpha}},~(t>e).
	\end{equation*}
\end{example}
%

\section{Time-changed fractional Poisson process-II}\label{sec:tcfpp-2}
\noindent The first-exit time of the subordinator $\{D_{f}(t)\}_{t\geq0}$ is its right-continuous inverse, defined by
\begin{equation*}
E_{f}(t)=\inf\{r\geq 0:D_{f}(r)>t\}, ~~~t\geq0,
\end{equation*}
and is called an {\it inverse subordinator} (see \cite{bertoin}). Note that for any $\rho>0$, $\mathbb{E}[E_f^\rho(t)]<\infty$ (see \cite[Section  2.1]{Inverse-subordinator}). We now consider the FPP time-changed by an inverse subordinator. 
\begin{definition}[\bf TCFPP-II] The time-changed fractional Poisson process version two (TCFPP-II) is defined as 
	\begin{equation*}
	\{W_{\beta}^{f}(t)\}=\{N_{\beta}(E_{f}(t))\},~t\geq0,
	\end{equation*}
	where $\{N_{\beta}(t)\}_{t\geq 0}$ is the FPP and is independent of the inverse subordinator $\{E_{f}(t)\}_{t\geq0}$. 
\end{definition}
\noindent We now present some results and distributional properties of the TCFPP-II. The proofs of some of them are shortened or omitted to avoid repetition from the previous section.  \\\\
\noindent 
The one-dimensional distributions of the TCFPP-II can be written as
\begin{align}\label{pmf-TCFPP-II}
\eta_{\beta}^{f}(n|t,\lambda)=\mathbb{P}[W_{\beta}^{f}(t)=n]=\frac{\lambda^n}{n!}\sum_{k=0}^{\infty}\frac{(n+k)!}{k!}\frac{(-\lambda)^k}{\Gamma(\beta(k+n)+1)}\mathbb{E}[E_{f}^{\beta(n+k)}(t)],
\end{align}
which follows from \eqref{fppd}.	
\begin{theorem}\label{mean-var-cov-tcfpp-2}
	Let $0<s\leq t$. Then\\
	(i) $~~\mathbb{E}[W^{f}_{\beta}(t)]=q\mathbb{E}[E_{f}^{\beta}(t)]$,\\
	(ii)$~~\text{Var}[W^{f}_{\beta}(t)]=q\mathbb{E}[E_{f}^{\beta}(t)]\left(1-q\mathbb{E}[E_{f}^{\beta}(t)]\right)+2d\mathbb{E}[E_{f}^{2\beta}(t)]$,
	\begin{flalign}
	\text{(iii) }~\text{Cov}[W^{f}_{\beta}(s),W^{f}_{\beta}(t)]&=q\mathbb{E}[E_{f}^{\beta}(s)]+ d\mathbb{E}[E_{f}^{2\beta}(s)]-q^{2}\mathbb{E}[E_{f}^{\beta}(s)]\mathbb{E}[E_{f}^{\beta}(t)] &&\nonumber&\\&~~~~~~~
	+ q^{2}\beta  \mathbb{E}\left[E_{f}^{2\beta}(t)B\left(\beta,1+\beta;\frac{E_{f}(s)}{E_{f}(t)}\right)\right].\nonumber&
	\end{flalign}
	\begin{proof}
		The proof is similar to the proof of Theorem \ref{mean-var-cov-tcfpp-1} and hence is omitted.
	\end{proof}
\end{theorem}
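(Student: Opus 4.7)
The plan is to carry out the same conditioning argument used in the proof of Theorem \ref{mean-var-cov-tcfpp-1}, simply replacing the subordinator $\{D_f(t)\}_{t\geq 0}$ by its right-continuous inverse $\{E_f(t)\}_{t\geq 0}$ throughout. The two structural ingredients this requires are (a) the independence of $\{N_\beta(t,\lambda)\}_{t\geq 0}$ and $\{E_f(t)\}_{t\geq 0}$, which holds by the definition of TCFPP-II, and (b) the monotonicity $E_f(s)\leq E_f(t)$ a.s.\ for $s\leq t$, which is immediate from the fact that $E_f$ is the right-continuous inverse of the strictly increasing process $D_f$. These two facts allow us to apply the mean and covariance identities \eqref{fppmean}--\eqref{fpp-cov} of the FPP inside a conditional expectation given $(E_f(s),E_f(t))$.

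For Part (i), I would condition on $E_f(t)$ and use $\mathbb{E}[N_\beta(u,\lambda)]=qu^\beta$ from \eqref{fppmean} to obtain
\begin{equation*}
\mathbb{E}[W_\beta^f(t)]=\mathbb{E}\bigl[\mathbb{E}[N_\beta(E_f(t))\mid E_f(t)]\bigr]=q\,\mathbb{E}[E_f^\beta(t)].
\end{equation*}
For Part (iii), I would first combine \eqref{fppmean} and \eqref{fpp-cov} to get the mixed second moment of the FPP, namely $\mathbb{E}[N_\beta(u)N_\beta(v)]=qu^\beta+du^{2\beta}+q^2\beta\,v^{2\beta}B(\beta,1+\beta;u/v)$ for $0<u\leq v$. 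Since $E_f(s)\leq E_f(t)$ a.s., this formula applies pathwise, and conditioning on $(E_f(s),E_f(t))$ gives
\begin{equation*}
\mathbb{E}[W_\beta^f(s)W_\beta^f(t)]=q\,\mathbb{E}[E_f^\beta(s)]+d\,\mathbb{E}[E_f^{2\beta}(s)]+q^2\beta\,\mathbb{E}\!\left[E_f^{2\beta}(t)B\!\left(\beta,1+\beta;\tfrac{E_f(s)}{E_f(t)}\right)\right].
\end{equation*}
Subtracting $\mathbb{E}[W_\beta^f(s)]\mathbb{E}[W_\beta^f(t)]=q^2\mathbb{E}[E_f^\beta(s)]\mathbb{E}[E_f^\beta(t)]$ produces the claimed expression for the covariance.

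Finally, Part (ii) follows from Part (iii) by setting $s=t$: the incomplete beta term becomes the complete beta $B(\beta,1+\beta)$, so the last term collapses to $d\,\mathbb{E}[E_f^{2\beta}(t)]$, which combined with the other summands produces the formula after elementary regrouping. There is really no obstacle here, since the machinery is identical to the TCFPP-I case; the only point that deserves a brief verification is the a.s.\ monotonicity of $E_f$, which guarantees that the ordering required in the FPP covariance formula \eqref{fpp-cov} is preserved after time-changing by $E_f$.
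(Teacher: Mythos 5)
Your proposal is correct and follows exactly the route the paper intends: the paper omits the proof, stating only that it is identical to that of Theorem \ref{mean-var-cov-tcfpp-1}, and your argument is precisely that conditioning argument with $D_f$ replaced by $E_f$, including the correct reduction of Part (ii) from Part (iii) via $d=\beta q^2 B(\beta,1+\beta)$. Your added remark on the a.s.\ monotonicity of $E_f$, which justifies applying the ordered covariance formula \eqref{fpp-cov} pathwise, is a sensible (if brief) point of care that the paper leaves implicit.
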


\noindent 
We next discuss the asymptotic behavior of moments of the TCFPP-II. The mean and variance functions contain the term of the form $\mathbb{E}[E^\beta_f(t)]$. Therefore, we study the asymptotic behavior of $\mathbb{E}[E^\beta_f(t)]$.
It will be studied using the Tauberian
theorem (see \cite[Theorem 4.1]{Taqqu2010} and \cite[p. 10]{bertoin}), which we reproduce here.  Recall that a function
$\ell(t)$, $t>0$, is \textit{slowly varying} at $0$ (respectively
$\infty$) if for all $c>0$, $\lim(\ell(ct)/\ell(t)) = 1$, as
$t\rightarrow 0$ (respectively $t\rightarrow \infty$). 
\begin{theorem}\label{Tauberian}
	(Tauberian theorem) Let $\ell:(0,\infty) \rightarrow (0,\infty)$ be a slowly varying
	function at $0$ (respectively $\infty$) and let $\rho \geq 0$.  Then
	for a function $U: (0,\infty) \rightarrow (0,\infty)$, the
	following are equivalent:
	
	(i) $U(x) \sim x^\rho \ell(x)/\Gamma(1 + \rho), \quad x \rightarrow 0$
	(respectively $x \rightarrow \infty$).
	
	(ii) $\widetilde{U}(s) \sim s^{-\rho-1} \ell(1/s), \quad s \rightarrow \infty$
	(respectively $s \rightarrow 0$),\\
	where $\widetilde{U}(s)$ is the LT of $U(x)$.
\end{theorem}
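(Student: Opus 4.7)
The statement is the classical Karamata Tauberian theorem for Laplace transforms, reproduced here from Bertoin and Pipiras--Taqqu. Since it is an equivalence, the plan is to split the argument into the easier Abelian direction and the substantive Tauberian direction; these require different techniques and are best handled separately. Throughout I will focus on the $s\to\infty$, $x\to 0$ version, as the $s\to 0$, $x\to\infty$ case is symmetric after the substitution $x\mapsto 1/x$.

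For the Abelian direction (i)~$\Rightarrow$~(ii), my approach is a direct computation. Writing $\widetilde{U}(s)=\int_0^\infty e^{-sx}U(x)\,dx$ and substituting $u=sx$ gives
\[
\widetilde{U}(s)=s^{-\rho-1}\int_0^\infty e^{-u}u^\rho\,\frac{U(u/s)}{(u/s)^\rho\ell(u/s)/\Gamma(1+\rho)}\,\frac{\ell(u/s)}{\ell(1/s)}\,\ell(1/s)\,du.
\]
By hypothesis (i) the first ratio tends to $1$ pointwise in $u$; by the Uniform Convergence Theorem for slowly varying functions the ratio $\ell(u/s)/\ell(1/s)$ tends to $1$ uniformly on compact subsets of $(0,\infty)$; and Potter's bounds supply an integrable dominating function on $(0,\infty)$. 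Dominated convergence together with $\int_0^\infty e^{-u}u^\rho\,du=\Gamma(1+\rho)$ then yields $\widetilde{U}(s)\sim s^{-\rho-1}\ell(1/s)$.

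For the Tauberian direction (ii)~$\Rightarrow$~(i), extra regularity on $U$ is needed (monotonicity is standard and is the implicit hypothesis in the cited sources). The strategy I would adopt is Karamata's: reduce to $\ell\equiv 1$, $\rho=0$ by a rescaling, and consider the family of measures $\mu_s$ on $[0,\infty)$ defined by $\mu_s(A)=s\int_A e^{-sx}\,dU(x)$. Hypothesis (ii) forces $\mu_s([0,\infty))\to 1$, and a direct scaling argument shows that $\int e^{-\sigma x}\,d\mu_s(x)$ converges as $s\to 0$ for every $\sigma>0$. By Feller's continuity theorem for Laplace transforms of measures, this is enough for weak convergence of $\mu_s$ to an explicit gamma-type probability measure. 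A Weierstrass approximation in the algebra generated by exponentials then upgrades convergence against $\{e^{-\sigma x}\}$ to convergence against all continuous bounded test functions, and finally the monotonicity of $U$ lets one transfer this integral convergence to the pointwise asymptotic (i).

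The main obstacle is precisely this last Tauberian step: passing from convergence of the Laplace transform to pointwise convergence of $U$ requires a one-sided control (monotonicity, or bounded variation with a positivity hypothesis), and without such a hypothesis the Abelian implication is strict. Since the theorem is entirely classical, in practice I would invoke the detailed estimates from \cite{Taqqu2010} and \cite{bertoin} rather than reproducing the approximation machinery in full.
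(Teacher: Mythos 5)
The paper gives no proof of this statement at all: it is explicitly \emph{reproduced} from \cite{Taqqu2010} and \cite[p.~10]{bertoin} and used as a black box, so there is no internal argument to compare yours against. Your sketch is the standard Karamata proof (Abelian direction by rescaling plus dominated convergence; Tauberian direction by weak convergence of the tilted measures via the continuity theorem, followed by a Weierstrass-type approximation), and as a roadmap it is essentially correct. You also correctly flag the one real defect in the statement as printed: the implication (ii)~$\Rightarrow$~(i) is false for a general positive $U$ and needs monotonicity (or some one-sided regularity); the cited sources impose it, the paper's statement silently drops it, and in the paper's application $U=M_p(t)=\E[E_f^p(t)]$ is non-decreasing, so no harm is done.

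Two technical points you should tighten if you were to write this out in full. First, in the Abelian direction the dominated-convergence step cannot be run over all of $(0,\infty)$ in one piece: hypothesis (i) controls $U(x)/\bigl(x^\rho\ell(x)\bigr)$ only near $x=0$, and Potter's bounds control $\ell(u/s)/\ell(1/s)$ only where both arguments are small, so you must split the integral at $u=As$ (or $x=A$) and kill the far range using the exponential factor together with local boundedness of $U$ away from the origin. Second, your Karamata-measure construction $\mu_s(A)=s\int_A e^{-sx}\,dU(x)$ is set up for the Laplace--Stieltjes transform, whereas the $\widetilde{U}$ in the statement (and in the paper's use, cf.\ \eqref{asympt-moment}) is the ordinary Laplace transform $\int_0^\infty e^{-sx}U(x)\,dx$; you should either apply your argument to $V(x)=\int_0^x U(y)\,dy$ and then recover $U$ by the monotone density theorem, or restate (ii) in Stieltjes form. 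Also, in the $x\to0$ regime you chose to treat, the limits in your Tauberian step should be taken as $s\to\infty$, not $s\to0$. None of this changes the verdict: the approach is the classical one and is sound, and deferring the remaining estimates to \cite{Taqqu2010} and \cite{bertoin} is exactly what the paper itself does.
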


\noindent Let $M_p(t)=\E[E_{f}^p(t)],~p>0$. The LT of the $p$-th moment of $E_{f}(t)$ is given by (see \cite{Arun-ITS})
\begin{align}
\widetilde{M}_p(s)=\frac{\Gamma(1+p)}{s(f(s))^p},\label{asympt-moment}
\end{align}
where $f(s)$ is the Bernstein function associated with $\{D_f(t)\}_{t\geq0}$. The asymptotic moments can be specifically computed for special cases, which also serves examples of the TCFPP-II processes.

\begin{example}[FPP subordinated with inverse gamma subordinator] We study the FPP time-changed by the inverse of the gamma subordinator $\{Y(t)\}_{t\geq0}$, with corresponding Bernstein function $f(s)=p\log(1+s/\alpha)$. The right-continuous inverse of the gamma subordinator $\{Y(t)\}_{t\geq0}$  is defined as 
	\begin{equation*}
	E_{Y}(t)=\inf\{r\geq0:Y(r)>t\}, ~t\geq0.
	\end{equation*}
	We study the asymptotic behavior of the mean of $\{W_\beta^{(1)}(t)\}_{t\geq0}=\{N_\beta(E_Y(t))\}_{t\geq0}$, that is, the function $\mathbb{E}[W_\beta^{(1)}(t)]=q\mathbb{E}[E_Y^\beta(t)]$. 
	The LT of $\mathbb{E}[E^\beta_Y(t)]$ is given by
	\begin{equation*}
	\widetilde{M}_\beta(s)=\mathscr{L}\left[\mathbb{E}[E^\beta_Y(t)]\right]=\frac{\Gamma(1+\beta)}{s(p\log(1+s/\alpha))^\beta}.
	\end{equation*} Note that  $p\log(1+s/\alpha)\sim ps/\alpha,$ as $s\to0$. Now using Theorem \ref{Tauberian}, we get (see also \cite[Proposition 4.1]{Arun-inverse-gamma})
	\begin{align*}
	\E[W^{(1)}_\beta(t)]&=q\E[E^\beta_Y(t)]\sim q(t\alpha/p)^\beta,~\text{as }t\to\infty.
	\end{align*}
	The asymptotic behavior of variance function of $\{W^{(1)}_\beta(t)\}$ can also be computed using above expression. 
\end{example}
\begin{example}[FPP subordinated with the inverse tempered $\alpha$-stable subordinator] Consider the FPP subordinated with the inverse tempered $\alpha$-stable subordinator $\{E^\mu_{\alpha}(t)\}_{t\geq0}$. The inverse tempered $\alpha$-stable subordinator is introduced by \cite{Arun-ITS} and they studied its asymptotic behavior of moments. 
	The $p$-th moment of $E^\mu_{\alpha}(t)$ satisfies (see \cite[Proposition 3.1]{Arun-ITS})
	\begin{eqnarray*}
		\E[(E^\mu_\alpha(t))^p] \sim \left\{
		\begin{array}{ll}
			\displaystyle \frac{\Gamma(1+p)}{\Gamma(1+p\alpha)}t^{p\alpha},&~~~\mbox{as}~ t\rightarrow 0,\\&\\
			\dfrac{\lambda^{p(1-\alpha)}}{\alpha^p}t^p,& ~~~\mbox{as}~t\rightarrow\infty.
		\end{array}\right.
	\end{eqnarray*}
	\noindent Therefore, we have that
	\begin{align*}
	\E[W_\beta^{(2)}(t)]=q\E[(E^\mu_{\alpha}(t))^\beta]\sim\left\{\begin{array}{ll}
	\displaystyle \frac{q\Gamma(1+\beta)}{\Gamma(1+\beta\alpha)}t^{\beta\alpha},&~~~\mbox{as}~ t\rightarrow 0,\vspace*{0.3cm}\\
	\dfrac{q\lambda^{\beta(1-\alpha)}}{\alpha^\beta}t^\beta,& ~~~\mbox{as}~t\rightarrow\infty.
	\end{array}\right.
	\end{align*}	
\end{example}
\begin{example}[FPP subordinated with inverse of the inverse Gaussian subordinator]
	The right-continuous inverse of the inverse Gaussian subordinator $\{G(t)\}_{t\geq0}$, with corresponding Bernstein function $f(s)=\delta\left(\sqrt{2s+\gamma^2}-\gamma\right)$, denoted by $\{E_G(t)\}_{t\geq0}$, defined as (see \cite{kumhit})
	\begin{equation*}
	E_G(t)=\inf\{r\geq 0:G(r)>t  \},~t\geq0.
	\end{equation*} Hence from \eqref{asympt-moment} 
	\begin{align*}
	\widetilde{M}_p(s)&=\mathscr{L}\left[\E[E_G^p(t)]\right]=\frac{\Gamma(1+p)}{s\left(\delta\left(\sqrt{2s+\gamma^2}-\gamma\right)\right)^p},
	\shortintertext{where $p>0$. This gives}
	\widetilde{M}_p(s)&\sim \left\{\begin{array}{ll}
	\displaystyle \frac{\Gamma(1+p)}{(\delta/\gamma)^p}s^{-1-p},&~~~\mbox{as}~ s\rightarrow 0,\vspace*{0.3cm}\\ 
	\dfrac{\Gamma(1+p)}{(\delta\sqrt{2})^p}s^{-1-p/2},& ~~~\mbox{as}~s\rightarrow\infty.
	\end{array}\right.
	\end{align*}
	Using above result and Theorem \ref{Tauberian}, we get
	\begin{align*}
	\E[(E_G(t))^p]\sim\left\{\begin{array}{ll}
	\displaystyle \frac{\Gamma(1+p)t^{p/2}}{\Gamma(1+p/2)(\delta\sqrt{2})^p},&~~~\mbox{as}~ t\rightarrow 0,\vspace*{0.3cm}\\
	\left(\dfrac{\gamma}{\delta}\right)^p t^p,& ~~~\mbox{as}~t\rightarrow\infty.
	\end{array}\right.
	\end{align*}	
	We finally get the asymptotic moments as
	\begin{align*}
	\E[W^{(3)}_\beta(t)]=q\E[(E_G(t))^\beta]\sim\left\{\begin{array}{ll}
	\displaystyle \frac{q\Gamma(1+\beta)}{\Gamma(1+\beta/2)(\delta\sqrt{2})^\beta}t^{\beta/2},&~~~\mbox{as}~ t\rightarrow 0,\vspace*{0.3cm}\\
	q\left(\dfrac{\gamma}{\delta}\right)^\beta t^\beta,& ~~~\mbox{as}~t\rightarrow\infty.
	\end{array}\right.
	\end{align*}
\end{example}

\noindent We next show that the TCFPP-II $\{W_{\beta}^{f}(t)\}_{t\geq0}$ is a renewal process. We begin with the following lemma.\\ \noindent Let $\{D_f(t)\}_{t\geq0}$ be a subordinator with the associated Bernstein function $f(s)$. Let $\{E_f(t)\}_{t\geq0}$ be the right-continuous inverse of $\{D_f(t)\}_{t\geq0}$. We call, rather loosely, $E_f(t)$ the inverse subordinator corresponding to $f(s)$.
\begin{lemma}
	Let $\{E_{f_{1}}(t)\}_{t\geq 0}$ and $\{E_{f_{2}}(t)\}_{t\geq 0}$ be two independent inverse subordinators corresponding to Bernstein functions $f_1(s)$ and $f_2(s)$, respectively. Then 
	\begin{equation}\label{G-TCFPP-II}
	\{E_{f_1}(E_{f_2}(t))\}\stackrel{d}{=}\{E_{f_1\circ f_2}(t)\},~t\geq0,
	\end{equation}
	where $(f_1\circ f_2)(s)=f_1(f_2(s)).$
	\begin{proof}
		Consider two independent subordinators $\{D_{f_1}(t)\}_{t\geq0}$ and $\{D_{f_2}(t)\}_{t\geq0}$ with
		\begin{equation*}
		\mathbb{E}[e^{-s D_{f_1}(t)}]=e^{-tf_1(s)}~~~\text{and}~~~\mathbb{E}[e^{-s D_{f_2}(t)}]=e^{-tf_2(s)},
		\end{equation*}
		where $f_1(s)$ and $f_{2}(s)$ are the associated Bernstein functions. We claim that 
		\begin{equation}\label{G-TCFPP-I}
		\{D_{f_2}(D_{f_1}(t))\}\stackrel{d}{=}\{D_{f_1\circ f_2}(t)\},~~~t\geq0,
		\end{equation}
		where $\circ$ denotes the composition of functions. To see this, let us compute the LT of the left-hand side
		\begin{equation*}
		\mathbb{E}\left[e^{-s D_{f_2}(D_{f_1}(t))}\right]=\mathbb{E}\left[\mathbb{E}\left[e^{-s D_{f_2}(D_{f_1}(t))}|D_{f_1}(t)\right]\right]=\mathbb{E}\left[e^{-f_2(s)D_{f_1}(t)}\right]=e^{-tf_1(f_2(s))},~s>0.  
		\end{equation*}
		Since $f(s)=(f_1\circ f_2)(s)$ is again a Bernstein function (see \cite[Remark 5.28 (ii)]{Bernstein-book}) and $\{D_{f_2}(D_{f_1}(t))\}_{t\geq0}$ is a L\'evy process (see \cite[Theorem 1.3.25]{appm}), it follows that $\{D_{f}(t)\}_{t\geq0}$ is a subordinator with associated Bernstein function $f(s)=(f_1\circ f_2)(s)$.

		\noindent Consider next have the inverse subordinators defined by
		\begin{equation*}
		E_{f_1}(t)=\inf\{r\geq 0:D_{f_1}(r)>t\}~~\text{and}~~E_{f_2}(t)=\inf\{r\geq 0:D_{f_2}(r)>t\}, ~t\geq0.
		\end{equation*}
		Then the process
		\begin{align*}
		E_{f_1\circ f_2}(t)&=\inf\{r\geq 0:D_{f_1\circ f_2}(r)>t\}\\
		&=\inf\{r\geq 0:D_{f_2}(D_{f_1}(r))>t\}~~~(\text{using \eqref{G-TCFPP-I}}).
		\shortintertext{By the property of right-continuous inverse, we have that   $\{D_{f_2}(D_{f_1}(r))>t\}=\{E_{f_2}(t)<D_{f_1}(r)\}$, and hence}
		E_{f_1\circ f_2}(t)&=\inf\{r\geq 0:D_{f_1}(r)>E_{f_{2}}(t)\} =E_{f_1}(E_{f_2}(t)),
		\end{align*}
		which completes the proof.
	\end{proof}
\end{lemma}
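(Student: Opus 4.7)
The plan is to first establish the analogous composition identity at the level of the subordinators themselves, and then transfer it to the inverse processes using the defining first-exit-time relation.

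First, I would show that for two independent subordinators, $D_{f_2}(D_{f_1}(t)) \stackrel{d}{=} D_{f_1\circ f_2}(t)$. This is a one-line Laplace-transform computation: conditioning on $D_{f_1}(t)$,
\[
\mathbb{E}\bigl[e^{-s\,D_{f_2}(D_{f_1}(t))}\bigr]
= \mathbb{E}\bigl[e^{-f_2(s)\,D_{f_1}(t)}\bigr]
= e^{-t f_1(f_2(s))}.
\]
To justify that the right-hand side is indeed the LT of a subordinator, I would invoke the fact that the composition of two Bernstein functions is again a Bernstein function, and that the time-change of a subordinator by an independent subordinator is itself a L\'evy process, hence a subordinator (with Laplace exponent $f_1\circ f_2$).

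Next, I would pass to the inverses by unwinding the definition. Starting from
\[
E_{f_1\circ f_2}(t) = \inf\{r\geq 0 : D_{f_1\circ f_2}(r) > t\},
\]
I would substitute the pathwise representation $D_{f_1\circ f_2}(r) \stackrel{d}{=} D_{f_2}(D_{f_1}(r))$ and then use the fundamental equivalence between a subordinator exceeding a level and its first-exit time lying below: $\{D_{f_2}(u) > t\} = \{E_{f_2}(t) < u\}$. Applying this with $u = D_{f_1}(r)$ yields
\[
\inf\{r\geq 0 : D_{f_2}(D_{f_1}(r)) > t\} = \inf\{r\geq 0 : D_{f_1}(r) > E_{f_2}(t)\} = E_{f_1}(E_{f_2}(t)),
\]
which gives the desired identity in distribution.

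The main conceptual obstacle is to keep track of which equalities hold pathwise and which hold only in distribution. The identity $D_{f_2}\circ D_{f_1} = D_{f_1\circ f_2}$ is only in distribution, so when I use it inside the $\inf$ defining the inverse, I must regard both sides as processes whose finite-dimensional distributions agree; since the first-exit time is a measurable functional of the path, this is enough to preserve the distributional equality after inversion. A secondary technical point is the need to cite (or recall) that the composition of Bernstein functions is Bernstein and that subordination of a subordinator by an independent subordinator is again a L\'evy process; both are standard and already referenced in the excerpt, so they can be quoted directly.
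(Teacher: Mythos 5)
Your proposal is correct and follows essentially the same route as the paper: the Laplace-transform computation for $D_{f_2}(D_{f_1}(t))$, the appeal to composition of Bernstein functions and subordination of L\'evy processes, and the unwinding of the inverse via the exit-time duality $\{D_{f_2}(u)>t\}=\{E_{f_2}(t)<u\}$ are all exactly the paper's steps. Your explicit remark that the composition identity holds only in distribution and must be transferred through the measurable functional defining the first-exit time is a point the paper's own proof passes over silently, so if anything your version is slightly more careful.
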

\begin{corollary}\label{lemma-ebeta}
	Let $\{E_\beta(t)\}_{t\geq 0}$ be inverse $\beta$-stable subordinator corresponding to $f_1(s)=s^\beta$, and $\{E_f(t)\}_{t\geq 0}$ be an inverse subordinator corresponding to $f_2(s)=f(s)$. Then from \eqref{G-TCFPP-II}, 
	\begin{equation}\label{lemma-ebeta-eq}
	\{E_\beta(E_f(t))\}_{t\geq0}\stackrel{d}{=}\{E_{\phi}(t)\}_{t\geq0},
	\end{equation}
	where $\phi(s)=(f(s))^\beta$. 
\end{corollary}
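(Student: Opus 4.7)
The plan is to treat this as an immediate specialization of the preceding lemma, identifying $f_1$ with the Bernstein function $s^\beta$ of the $\beta$-stable subordinator and $f_2$ with the given Bernstein function $f(s)$. The bulk of the work is already done by equation \eqref{G-TCFPP-II}: once the two ingredient processes are correctly matched to $E_{f_1}$ and $E_{f_2}$, substituting into the right-hand side yields $E_{f_1\circ f_2}(t)$, and $(f_1\circ f_2)(s)=(f(s))^\beta=\phi(s)$, so the corollary reads off directly.

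In more detail, I would first note that $s\mapsto s^\beta$ is the Laplace exponent of the $\beta$-stable subordinator $\{D_\beta(t)\}_{t\geq 0}$, whose right-continuous inverse is exactly the inverse $\beta$-stable subordinator $\{E_\beta(t)\}_{t\geq 0}$ used in the statement. I would take $f_1(s)=s^\beta$ and $f_2(s)=f(s)$, both Bernstein functions (the first with $\beta\in(0,1)$, the second by assumption). Assuming the two processes $\{E_\beta(t)\}_{t\geq 0}$ and $\{E_f(t)\}_{t\geq 0}$ are independent (which is the standing setup of the lemma), an application of \eqref{G-TCFPP-II} gives
\begin{equation*}
\{E_\beta(E_f(t))\}_{t\geq 0}\stackrel{d}{=}\{E_{f_1\circ f_2}(t)\}_{t\geq 0}.
\end{equation*}

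It remains only to evaluate the composition: $(f_1\circ f_2)(s)=f_1(f(s))=(f(s))^\beta=\phi(s)$. The fact that $\phi$ is again a Bernstein function—needed to guarantee that $E_\phi$ is well defined as the inverse of a genuine subordinator—follows from the composition property of Bernstein functions cited in the proof of the lemma (namely \cite[Remark~5.28~(ii)]{Bernstein-book}) applied to $s^\beta$ and $f(s)$. Combining these observations yields \eqref{lemma-ebeta-eq}.

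I do not anticipate any real obstacle here: the corollary is essentially bookkeeping on top of the previous lemma. The only mild care required is to verify that $(f(s))^\beta$ is Bernstein, which is immediate from the cited composition property, and to ensure the independence hypothesis of the lemma is inherited from the setup of the corollary.
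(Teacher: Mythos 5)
Your proposal is correct and follows exactly the route the paper takes: the corollary is stated as an immediate specialization of \eqref{G-TCFPP-II} with $f_1(s)=s^\beta$ and $f_2(s)=f(s)$, so that $(f_1\circ f_2)(s)=(f(s))^\beta=\phi(s)$, and the paper offers no further proof. Your additional remarks on independence and on $\phi$ being a Bernstein function are consistent with the hypotheses already established in the lemma's proof.
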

\begin{remark}
	One can  further generalize the TCFPP-I process $\{Q_{\beta}^{f}(t)\}_{t\geq0}$ and TCFPP-II process $\{W_{\beta}^{f}(t)\}_{t\geq0}$, by subordinating it again with a subordinator and an inverse subordinator, respectively. As it clearly shown in \eqref{G-TCFPP-I} and \eqref{G-TCFPP-II}, the subordination of subordinator and inverse subordinator yields again a subordinator and an inverse subordinator, respectively. Hence, further subordination leads again to the processes of type TCFPP-I $\{Q_{\beta}^{f}(t)\}_{t\geq0}$ and TCFPP-II $\{W_{\beta}^{f}(t)\}_{t\geq0}$. This is also valid for $n$-iterated subordination. 
\end{remark}

\begin{theorem}
	The TCFPP-II $\{W_{\beta}^{f}(t)\}_{t\geq0}$ is a renewal process with {\it iid} waiting times $\{J_n\}_{n\geq1}$ with distribution
	\begin{equation}\label{renewal-1}
	\mathbb{P}[J_n>t]=\mathbb{E}[e^{-\lambda E_{\phi}(t)}],
	\end{equation}
	where $ E_{\phi}(t)$ is the inverse subordinator corresponding to $\phi(s)=(f(s))^\beta$.
	\begin{proof} Using \eqref{FPP-main} and Corollary \ref{lemma-ebeta}, we have 
		\begin{equation*}
		\{W_{\beta}^{f}(t)\}_{t\geq0}=\{N_{\beta}(E_f(t))\}_{t\geq0}\stackrel{d}{=}\{N(E_{\beta}(E_f(t)))\}_{t\geq0}\stackrel{d}{=}\{N(E_\phi(t))\}_{t\geq0},
		\end{equation*}
		where $\phi(s)=(f(s))^\beta.$  Therefore, the TCFPP-II $\{W_{\beta}^{f}(t)\}_{t\geq0}$ is a Poisson process time-changed by an inverse subordinator $\{E_\phi(t)\}_{t\geq0}$ corresponding to Bernstein function $\phi(s)=(f(s))^\beta.$ From   
		\cite[Theorem 4.1]{mnv}, we deduce that the time-changed Poisson process
		\begin{equation*}
		\{W_{\beta}^{f}(t)\}_{t\geq0}\stackrel{d}{=} \{N(E_\phi(t))\}_{t\geq0}  			
		\end{equation*}
		is a renewal process with {\it iid} waiting times $\{J_n\}_{n\geq1}$ having the distribution \eqref{renewal-1}.
	\end{proof}
\end{theorem}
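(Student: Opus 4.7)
The plan is to reduce the TCFPP-II to a Poisson process subordinated by a single inverse subordinator, and then invoke the known renewal characterization from \cite[Theorem 4.1]{mnv}. First I would write, by definition, $\{W_{\beta}^{f}(t)\}_{t\geq 0}=\{N_{\beta}(E_f(t),\lambda)\}_{t\geq 0}$, and then apply the subordination identity \eqref{FPP-main}, $N_{\beta}(t,\lambda)\stackrel{d}{=}N(E_{\beta}(t),\lambda)$, with $\{E_{\beta}(t)\}_{t\geq 0}$ taken independent of $\{N(t,\lambda)\}_{t\geq 0}$ and of $\{E_f(t)\}_{t\geq 0}$. Substituting the time variable $t$ by $E_f(t)$ (permitted since $E_f$ is independent of $N_{\beta}$) yields $\{W_{\beta}^{f}(t)\}_{t\geq 0}\stackrel{d}{=}\{N(E_{\beta}(E_f(t)),\lambda)\}_{t\geq 0}$.

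Next I would collapse the nested inverse subordinators into one by invoking Corollary \ref{lemma-ebeta}, which gives $\{E_{\beta}(E_f(t))\}_{t\geq 0}\stackrel{d}{=}\{E_{\phi}(t)\}_{t\geq 0}$ with $\phi(s)=(f(s))^{\beta}$. Because $s\mapsto s^{\beta}$ is a Bernstein function and the composition of Bernstein functions is again Bernstein, $\phi$ is a genuine Bernstein function, so $\{E_{\phi}(t)\}_{t\geq 0}$ is a bona fide inverse subordinator. Consequently, $\{W_{\beta}^{f}(t)\}_{t\geq 0}\stackrel{d}{=}\{N(E_{\phi}(t),\lambda)\}_{t\geq 0}$: the TCFPP-II is, in law, an ordinary Poisson process time-changed by the single inverse subordinator $E_{\phi}$.

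At this point I would directly cite \cite[Theorem 4.1]{mnv}, which establishes that a Poisson process time-changed by an independent inverse subordinator is a renewal process whose iid waiting times $\{J_n\}_{n\geq 1}$ have survival function obtained by conditioning: $\mathbb{P}[J_n>t]=\mathbb{P}[N(E_{\phi}(t),\lambda)=0]=\mathbb{E}[e^{-\lambda E_{\phi}(t)}]$, which is exactly \eqref{renewal-1}. The only substantive point to check along the way is the independence bookkeeping, namely that the processes $\{N(t,\lambda)\}$, $\{E_{\beta}(t)\}$, and $\{E_f(t)\}$ can be realized on a common probability space as mutually independent so that both the FPP subordination identity and the hypotheses of \cite[Theorem 4.1]{mnv} apply to the resulting composition. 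This is routine and the rest of the argument is a short chain of distributional equalities.
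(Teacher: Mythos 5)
Your proposal is correct and follows essentially the same route as the paper: apply the subordination identity \eqref{FPP-main}, collapse $E_{\beta}\circ E_f$ into $E_{\phi}$ with $\phi(s)=(f(s))^{\beta}$ via Corollary \ref{lemma-ebeta}, and invoke \cite[Theorem 4.1]{mnv}. Your extra remarks on independence bookkeeping and the identification $\mathbb{P}[J_n>t]=\mathbb{P}[N(E_{\phi}(t),\lambda)=0]$ only make explicit what the paper leaves implicit.
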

\begin{remark}
	By \cite[Remark 5.4]{mnv}, the {\it pmf} $\eta_{\beta}^{f}(n|t,\lambda)$, given in \eqref{pmf-TCFPP-II}, of the TCFPP-II $\{W_{\beta}^{f}(t)\}_{t\geq0}$ satisfies
	\begin{equation*}
	\phi(\partial_t)\eta_{\beta}^{f}(n|t,\lambda)=-\lambda\left(\eta_{\beta}^{f}(n|t,\lambda)-\eta_{\beta}^{f}(n-1|t,\lambda)\right)+H(x)\psi_\phi (t,\infty)
	\end{equation*}
	in the mild sense, where $\phi(s)=\left(f(s)\right)^{\beta}$, $\psi_{\phi}(\cdot)$ is the L\'evy measure associated to Bernstein function $\phi(s)$ and $H(x)=I(x\geq0)$ is the Heaviside function.
\end{remark}

We next present the bivariate distributions of the TCFPP-II, which generalizes a result by \cite[Theorem 2.1]{Ors-Pol-int-FPP}. Let $F(t)$ be the distribution function of the waiting time $J_n$ and $S_n=J_1+\ldots+J_n$ be the time of $n$th jump. Since $J_n$'s are {\it iid}, we have that $\mathbb{P}[S_n\leq t]=F^{\ast n}(t)$, where $F^{\ast n}(t)$ denotes the $n$-fold convolution of $F(t)$. For $n,k\geq1$, define $\tau^{(k)}_n=S_{n+k}-S_n\stackrel{d}{=}S_k$, where $\tau^{(k)}_n$ is the time elapsed between $n$-th and $(n+k)$-th jump. Clearly, $\mathbb{P}[\tau^{(1)}_n\in dt]=dF(t)$ and $\mathbb{P}[\tau^{(k)}_n\in dt]=dF^{\ast k}(t)$, for $k\geq1$.
\begin{theorem}\label{bivariate-thm} Let $0\leq s<t$ and $0\leq m\leq n$ be nonnegative integers. Let $\{E_\phi(t)\}_{t\geq0}$ be the inverse subordinator corresponding to $\phi(s)=(f(s))^\beta.$
	The TCFPP-II  $\{W_{\beta}^{f}(t)\}_{t\geq0}$ has the bivariate distributions given by,
	\begin{align*}
	\mathbb{P}[W_\beta^f(s)&=m,W_\beta^f(t)=n]\\&=\left\{
	\begin{array}{ll}
	\displaystyle\int_{0}^{s}\mathbb{E}[e^{-\lambda E_{\phi}(t-u)}]dF^{\ast m}(u),~   \mbox{if } n=m\geq0, \\ &\\
	\displaystyle\int_{0}^{s}dF^{\ast m}(u) \int_{s-u}^{t-u}dF(v)\int_{0}^{t-(u+v)}\mathbb{E}[e^{-\lambda E_{\phi}(t-u-v-x)}]dF^{\ast n-m-1}(x), & \mbox{if }  n\geq m+1,
	\end{array}
	\right.
	\end{align*}
	where $F(t)=1-\mathbb{E}[e^{-\lambda E_{\phi}(t)}]$ and $dF^{\ast n}(t)$ is the $n$-fold convolution of $dF(t),~n\geq 1$, with $dF^{*0}(t)=\delta_0(t)$, the Dirac delta function at zero.
	\begin{proof}
		\noindent	{\it Case 1}: When $n=m$, we have (see Figure \ref{fig:mequaln}) 
		\begin{figure}[ht!]
			\begin{tikzpicture}[xscale=1]
			\draw [ultra thick] (-.31,0) -- (9.3,0);
			\draw [very thick](0,-.1) -- (0, .1) node[below] at (0,-.1)%
			{0};
			\draw [very thick](2.5,-.1) -- (2.5, .1) node[above] at (2.5,.1) {$S_m$};
			\draw [very thick](3,-.1) -- (3, .1) node[below] at (3,-.1)%
			{$s$};
			\draw [very thick](7,-.1) -- (7, .1) node[below] at (7,-.1) {$t$};
			\draw [very thick] (7.5,-.1) -- (7.5, .1)node[above] at (7.5,.1) {$S_{m+1}$};
			\draw[-][thick,<->] (2.5,-0.7) -- (7.5,-0.7) node[below] at (5,-.6) {$\tau_m^{(1)}$};
			\end{tikzpicture}
			\caption{Waiting times between events for $m=n$}
			\label{fig:mequaln}
		\end{figure}\\
		\begin{align*}
		\mathbb{P}[W_\beta^f(s)=m,W_\beta^f(t)=m]&=\mathbb{P}\big[0<S_m\leq s;~t<S_{m+1}\big]=\mathbb{P}\big[0<S_m\leq s;~t<S_{m}+\tau_m^{(1)}\big]\nonumber\\
		&=\mathbb{P}\big[0<S_m\leq s;~\tau_m^{(1)}>t-S_m\big]\\
		&=\int_{0}^{s}dF^{*m}(u)\mathbb{P}[\tau_m^{(1)}>t-u\big]~~(\text{since }S_m\text{ and }\tau_m^{(1)} \text{ are independent})\\
		&=\int_{0}^{s}\mathbb{E}[e^{-\lambda E_{\phi}(t-u)}]dF^{\ast m}(u).
		\end{align*}
		
		\noindent{\it Case 2:} When $n\geq m+1$, it follows that (see Figure \ref{fig:mlessn})
		\begin{figure}[ht!]
			\begin{tikzpicture}[xscale=1.5]
			\draw [ultra thick] (-.31,0) -- (9.3,0);
			\draw [very thick](0,-.1) -- (0, .1) node[below] at (0,-.1)%
			{0};
			\draw [very thick](2.5,-.1) -- (2.5, .1) node[above] at (2.5,.1) {$S_m$};
			\draw [very thick](3,-.1) -- (3, .1) node[below] at (3,-.1)%
			{$s$};
			\draw [very thick](3.7,-.1) -- (3.7, .1)node[above] at (3.7,.1) {$S_{m+1}$};
			\draw [very thick](6.3,-.1) -- (6.3, .1)node[above] at (6.3,.1) {$S_n$};
			\draw [very thick](7,-.1) -- (7, .1) node[below] at (7,-.1) {$t$};
			\draw [very thick] (8,-.1) -- (8, .1)node[above] at (8,.1) {$S_{n+1}$};
			
			\draw[-] [thick,<->] (2.5,-1) -- (3.7,-1) node[below] at (3.1,-1.1) {$\tau^{(1)}_m$};
			\draw[-][thick,<->] (3.7,-0.5) -- (6.3,-0.5) node[below] at (5,-.6) {$\tau_{m+1}^{(n-m-1)}$};
			\draw[-][thick,<->] (6.3,-1) -- (8,-1) node[below] at (7.2,-1.1) {$\tau^{(1)}_n$};
			\end{tikzpicture}
			\caption{Waiting times between events for $n\geq m+1$}
			\label{fig:mlessn}
		\end{figure}
		\begin{align}
		\mathbb{P}[W_\beta^f(s)&=m,W_\beta^f(t)=n]\nonumber\\&=\mathbb{P}\big[0<S_m\leq s;~\tau^{(1)}_m>s-S_m;~\tau^{(1)}_m<t-S_m;0<\tau_{m+1}^{(n-m-1)}<t-S_{m+1};~\tau^{(1)}_m>t-S_n\big]\nonumber\\
		&=\mathbb{P}\big[0<S_m\leq s;s-S_m<\tau^{(1)}_m<t-S_m;0<\tau_{m+1}^{(n-m-1)}<t-S_{m}-\tau^{(1)}_m;\nonumber\\&\hspace*{10cm}\tau^{(1)}_n>t-S_m-\tau^{(1)}_m-\tau_{m+1}^{(n-m-1)}\big].\nonumber
		\end{align}
		Since the waiting times between events are {\it iid}, we have that
		\begin{align*}
		\mathbb{P}[W_\beta^f(s)&=m, ~W_\beta^f(t)=n]\\&
		=\int_{0}^{s}\mathbb{P}[S_m\in du] \int_{s-u}^{t-u}\mathbb{P}[\tau^{(1)}_m\in dv]\int_{0}^{t-(u+v)}\mathbb{P}[\tau_{m+1}^{(n-m-1)}\in dw]\nonumber\int_{t-(u+v+w)}^{\infty}\mathbb{P}\left[\tau^{(1)}_n\in dx\right]\nonumber\\
		&=\int_{0}^{s}dF^{\ast m}(u) \int_{s-u}^{t-u}dF(v)\int_{0}^{t-(u+v)}dF^{\ast (n-m-1)}(w)\mathbb{P}[\tau^{(1)}_n>(t-u-v-w)]\nonumber\\
		&=\int_{0}^{s}dF^{\ast m}(u) \int_{s-u}^{t-u}dF(v)\int_{0}^{t-(u+v)}\mathbb{E}[e^{-\lambda E_{\phi}(t-u-v-w)}]dF^{\ast (n-m-1)}(w)\nonumber,
		\end{align*}
		which completes the proof. 
	\end{proof}
\end{theorem}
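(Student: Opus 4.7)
The plan is to exploit the renewal structure of TCFPP-II established in the preceding theorem. Since $\{W_\beta^f(t)\}_{t\geq 0}$ is a renewal process with iid waiting times $\{J_n\}$ satisfying $\mathbb{P}[J_n > t] = \mathbb{E}[e^{-\lambda E_\phi(t)}]$, the event $\{W_\beta^f(u) = k\}$ translates to $\{S_k \leq u < S_{k+1}\}$, where $S_k = J_1+\cdots+J_k$ (with $S_0 = 0$). The joint event $\{W_\beta^f(s) = m,\,W_\beta^f(t) = n\}$ will therefore be expressed in terms of the partial sums $S_m,S_{m+1},S_n,S_{n+1}$ and integrated using the independence of non-overlapping blocks of waiting times, together with the identity $\mathbb{P}[\tau^{(k)}_j \in du] = dF^{\ast k}(u)$.

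For the case $n=m$ (Figure 1), the event is $\{S_m \leq s\}\cap\{S_{m+1}>t\}$. Writing $S_{m+1}=S_m+\tau^{(1)}_m$ and using that $\tau^{(1)}_m \stackrel{d}{=} J_1$ is independent of $S_m$, I condition on $S_m=u \in[0,s]$ and note that $\mathbb{P}[\tau^{(1)}_m > t-u] = \mathbb{E}[e^{-\lambda E_\phi(t-u)}]$, which immediately yields the first formula.

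For the case $n \geq m+1$ (Figure 2), I split the interval $[0,t]$ into four consecutive blocks: (i) time up to $S_m$, constrained to lie in $[0,s]$; (ii) the first waiting time $\tau^{(1)}_m$, which must push $S_{m+1}$ past $s$ but not past $t$, i.e., $\tau^{(1)}_m \in (s-S_m,\,t-S_m]$; (iii) the aggregate time $\tau^{(n-m-1)}_{m+1}=S_n-S_{m+1}$, which must satisfy $\tau^{(n-m-1)}_{m+1}\in[0,\,t-S_{m+1}]$ so that $S_n\leq t$; and (iv) the final waiting time $\tau^{(1)}_n$, which must exceed $t-S_n$ so that no further jump occurs before $t$. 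Independence of the four blocks (they are built from disjoint groups of iid $J_k$'s) gives the joint probability as a product of the corresponding distributions. Writing the quadruple integral and converting the last marginal by $\mathbb{P}[\tau^{(1)}_n>t-u-v-x] = \mathbb{E}[e^{-\lambda E_\phi(t-u-v-x)}]$ produces the stated formula; the boundary case $n=m+1$ is covered by the convention $dF^{\ast 0}=\delta_0$, which collapses the inner convolution correctly.

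The main obstacle is purely bookkeeping: one must get the integration region exactly right, in particular the coupling between the upper limit of $\tau^{(1)}_m$ and the upper limit of $\tau^{(n-m-1)}_{m+1}$, both of which depend on the previous variable through the condition $S_n\leq t$. Once the region is identified from the picture, Fubini and the iid property make the remainder of the calculation mechanical, and no further input about the subordinator is needed beyond the distributional identity for $J_n$.
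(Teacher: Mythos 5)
Your proposal is correct and follows essentially the same route as the paper: identify $\{W_\beta^f(s)=m,\,W_\beta^f(t)=n\}$ via the renewal representation with the partial sums $S_m$, $S_{m+1}$, $S_n$, $S_{n+1}$, decompose into the same four constraints on $S_m$, $\tau^{(1)}_m$, $\tau^{(n-m-1)}_{m+1}$, $\tau^{(1)}_n$, and integrate using the independence of disjoint blocks of the iid waiting times together with $\mathbb{P}[J_n>t]=\mathbb{E}[e^{-\lambda E_\phi(t)}]$. The integration region you describe, including the coupling of the upper limits through $S_n\leq t$ and the $dF^{*0}=\delta_0$ convention for $n=m+1$, matches the paper's computation exactly.
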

\noindent Let us examine a special case of Theorem \ref{bivariate-thm} for the FPP.
\begin{remark}
	It is known (see \cite{mnv}) that the FPP $\{N_\beta(t)\}_{t\geq0}=\{N(E_\beta(t))\}_{t\geq0}$ is a renewal process whose inter-arrival times follow the Mittag-Leffler distribution, that is, 
	\begin{equation*}
	\mathbb{P}[J_n\leq t]=F(t)=1-L_\beta(-\lambda t^\beta),~0<\beta<1,
	\end{equation*}
	where $L_{\beta}(z)$ is the Mittag-Leffler function defined in \eqref{Mittag-Leffler-function}. Let us define $L_{\alpha,0}^0(-\lambda t^\beta):=t\delta_0(t)$, where $\delta_0(t)$ is the Dirac delta function at zero. This implies for $t\geq0$ (see \cite{Ors-Pol-int-FPP} and references therein),
	\begin{equation}\label{remark-fpp}
	\begin{aligned}
	\mathbb{P}[S_m\in dt]&=\mathbb{P}[\tau^{(m)}_n\in dt]=dF^{\ast m}(t)=\lambda^m t^{m\beta-1}L_{\beta,m\beta}^{m}(-\lambda t^\beta)dt,~m\geq0,
	\end{aligned} 
	\end{equation}
	where $L_{\alpha,\beta}^\gamma(z)$ is the generalized Mittag-Leffler function defined in \eqref{Mittag-Leffler-general}. The LT of the inverse $\beta$-stable subordinator is given by (see \cite[eq. (16)]{bingham71})
	\begin{equation}\label{LT-E-beta}
	\mathbb{E}[e^{-\lambda E_\beta(t)}]=L_\beta(-\lambda t^\beta).
	\end{equation}  Using  \eqref{remark-fpp}, \eqref{LT-E-beta} and Theorem \ref{bivariate-thm}, the bivariate distribution of the FPP, when $n=m\geq0$, is
	\begin{equation*}
	\mathbb{P}[N_\beta(s)=m,N_\beta(t)=m]=\lambda^m\int_{0}^{s}u^{m\beta-1}L^m_{\beta,m\beta}(-\lambda u^\beta)L_\beta(-\lambda(t-u)^\beta)du,~ m\geq 0.
	\end{equation*}
	For $n\geq m+1$,
	\begin{align*}
	\mathbb{P}[N_\beta(s)=m,N_\beta(t)=n]&
	=\lambda^n\int_{0}^{s} u^{m\beta-1}L_{\beta,m\beta}^{m}(-\lambda u^\beta)\int_{s-u}^{t-u} v^{\beta-1}L^1_{\beta,\beta}(-\lambda v^\beta)\int_{0}^{t-(u+v)} x^{\beta(n-m-1)-1}\\ 
	&~~~\times L_{\beta,\beta(n-m-1)}^{n-m-1 }(-\lambda x^\beta) L_{\beta}(-\lambda(t-u-v-x)^\beta)]dudvdx,
	\end{align*} 
	which coincides (2.9) of \cite{Ors-Pol-int-FPP}. Indeed, it is shown in \cite[eq. (2.6)]{Ors-Pol-int-FPP} that
	\begin{align*}
	\mathbb{P}[N_\beta(s)=m,N_\beta(t)=n]&=\lambda^n\displaystyle\int_{0}^{s}u^{m\beta -1}L_{\beta,\beta m}^m(-\lambda u^\beta)du \displaystyle\int_{s-u}^{t-u}v^{\beta-1}L_{\beta,\beta}^1(-\lambda v^\beta)\\&~~\times(t-u-v)^{\beta(n-m-1)}L^{n-m}_{\beta,\beta(n-m-1)+1}(-\lambda(t-u-v)^\beta)dv,~n\geq m+1.
	\end{align*}
	When $\beta=1$, $L_{1,1}^1(x)=e^x$, and $L^1_{1,m}(x)=e^x/(m-1)!$ and
	\begin{align*}
	\mathbb{P}[N(s)=m,N(t)=n]&=\left\{
	\begin{array}{ll}
	\dfrac{\lambda^ms^m}{m!}e^{-\lambda t},  & \mbox{if } n=m, \\ &\\
	\dfrac{\lambda^ns^m(t-s)^{n-m}}{n!}\displaystyle\binom{n}{m}e^{-\lambda t}, & \mbox{if }  n\geq m+1,
	\end{array}
	\right.
	\end{align*}
	the bivariate distribution of the Poisson process, as expected.
\end{remark}


\section{Simulation}\label{sec:simulation}
\noindent In this section, we present  simulated sample paths for some TCFPP-I and TCFPP-II processes. The sample paths for the FNBP, the FPP subordinated with tempered $\alpha$-stable subordinator (FPP-TSS) and the FPP subordinated with inverse Gaussian subordinator (FPP-IGN) are presented for a chosen set of parameters. The simulations of the corresponding TCFPP-II process of the FPP subordinated with inverse gamma subordinator (FPP-IG), the FPP subordinated with inverse tempered $\alpha$-stable subordinator (FPP-ITSS), and the FPP subordinated with inverse of inverse Gaussian subordinator (FPP-IIGN) are also given in this section.   We first present the algorithm for simulation of the FPP.

\begin{algorithm}[\bf Simulation of the FPP]\label{simulation-fpp}
	This algorithm (see \cite{Cahoy2010}) gives the number of events $N_\beta(t),~0<\beta<1$ of the FPP up to a fixed time $T$. 
	\begin{enumerate}[(a)]
		\item Fix the parameters $\lambda>0$ and $0<\beta<1$ for the FPP.
		\item	Set $n=0$ and $t=0.$
		\item Repeat while $t<T$
		\begin{enumerate}
			\item[] Generate three independent uniform random variables $U_i\sim U(0,1)$, $i = 1, 2, 3$.
			\item[]  Compute (see \cite{Kanter1975}) $$dt=\frac{|\ln U_1|^{1/\beta}}{\lambda^{1/\beta}}\frac{\sin(\beta\pi U_2)[\sin(1-\beta)\pi U_2]^{1/\beta-1}}{[\sin(\pi U_2)]^{1/\beta}|\ln U_3|^{1/\beta-1}}.$$
			\item[]  $t=t+dt$ and $n=n+1$.
			
		\end{enumerate}
		\item Next $t$.
	\end{enumerate}
	Then $n$ denotes the number of events $N_\beta(t)$ occurred up to time $T$.
\end{algorithm} 
\noindent We next present the algorithms for the simulation of the gamma subordinator, the tempered $\alpha$-stable subordinator and the inverse Gaussian subordinator. The generated sample paths from these algorithms will then be used to simulate the inverse subordinator and the TCFPP-I.
\begin{algorithm}[\bf Simulation for the gamma subordinator]\label{simulation-gamma}
	\begin{enumerate}[(a)]
		\item[]	
		\item Fix the parameters $\alpha$ and $p$ for gamma subordinator.
		\item  Choose an interval $[0, T].$ Choose $n+1$ uniformly spaced time points $0=t_0,t_1,\ldots,t_n=T$ with $h=t_2-t_1.$
		\item Simulate $n$ independent gamma random variables $Q_i\sim G(\alpha,ph),1\leq i\leq n$, using GSS algorithm (see \cite[p. 321]{gamma-simu}).
		
		\item The discretized sample path of $Y(t)$ at $t_i$ is $Y(ih)=Y(t_i)=\sum_{j=1}^{i}Q_{j}, 1\leq i\leq n$ with $Q_0=0.$ 
		
	\end{enumerate}
\end{algorithm}

\begin{algorithm}[\bf  Simulation for the TSS]\label{simulation-tss}
	
	\begin{enumerate}[(a)]
		\item[]	
		\item 	Choose the  parameters $\mu>0$ and $0<\alpha<1$.
		\item Choose an interval $[0, T].$ Choose $n+1$ time points $0=t_0,t_1,\ldots,t_n=T.$
		\item Simulate $D_\alpha^{\mu (t_i-t_{i-1})^{1/\alpha}}(1)$ for $1\leq i\leq n$ from the Algorithm 3.2 of \cite{Hofert2011}. 
		\item Compute the increments
		\begin{equation*}
		\Delta D_\alpha^{\mu{(i)}}=D^{\mu}_\alpha(t_i)-D^{\mu}_\alpha(t_{i-1})=(t_i-t_{i-1})^{1/\alpha}D_\alpha^{\mu (t_i-t_{i-1})^{1/\alpha}}(1),~ 1\leq i\leq n,
		\end{equation*}
		with $D^{\mu}_\alpha(0)=0.$
		\item The discretized sample path of $D^\mu_\alpha(t)$ at $t_i$ is $D^\mu_\alpha(t_i)=\sum_{j=1}^{i}\Delta D_\alpha^{\mu{(j)}}, 1\leq i\leq n.$		
	\end{enumerate}
\end{algorithm}

\begin{algorithm}[\bf Simulation of the IGN subordinator]\label{simulation-ig}
	The algorithm to generate the IGN random variables is given in  \cite[p. 183]{ContTan2004}.
	\begin{enumerate}[(a)]
		\item  Choose an interval $[0, T].$ Choose $n+1$ uniformly spaced time points $0=t_0,t_1,\ldots,t_n=T$ with $h=t_2-t_1.$
		\item  Since IGN subordinator $\{G(t)\}_{t\geq0}$ has independent and stationary increments, $F_i\equiv G(t_i)-G(t_{i-1})\stackrel{d}= G(h)\sim$ IGN$(h, 1)$ for $1\leq i\leq n$ and $h=T/n$. Now generate $n$ {\it iid} IGN variables $F_i$'s as follows (see \cite[p. 183]{ContTan2004}, therein substituted $\delta=1=\gamma$):
		\begin{enumerate}
			\item[]  Generate a standard normal random variable $N$.
			\item[]  Assign $X=N^2$.
			\item[]  Assign $Y=h+\frac{X}{2}-\frac{1}{2}\sqrt{4hX+X^2}$.
			\item[]  Generate a uniform $[0,1]$ random variable $U$.
			\item[]  If $U\leq \frac{h}{h+Y}$, return $Y$; else return $\frac{h^2}{Y}$.
		\end{enumerate}
		\item Assign $G(t_0) =0$. The discretized sample path of $G(t)$ at $t_i$ is $G(t_i)=\sum_{j=1}^{i}F_j, 1\leq i\leq n.$
		
	\end{enumerate}
\end{algorithm}
\noindent Consider next the algorithm to simulate the inverse subordinator $\{E_f(t)\}_{t\geq 0}$. We first define $E_f^\delta(t)$ with the step length $\delta$ as (see \cite{Arun-inverse-gamma})
\begin{equation*}
E_f^\delta(t) = (\min\{n\in\mathbb{N}:D_f(\delta n) > t\} -1)\delta, ~~n = 1,2,...,
\end{equation*}
where $D_f(\delta n)$ is the value of the subordinator $D_f(t)$ evaluated at $\delta nδ$, which can be simulated by using the method presented above. Observe that trajectory of $E_f^\delta (t)$ has increments
of length $\delta$ at random time instants governed by process $D_f(t)$ and therefore $E_f^\delta(t)$ is the approximation of operational time. 
\begin{algorithm}[\bf Simulation of the inverse subordinator]
	
	\begin{enumerate}[(a)]
		\item[]		
		\item Fix the parameters for the inverse subordinator, whichever under consideration.
		\item Choose $n$ uniformly spaced time points $0=t_1,t_2,\ldots,t_n=T$ with $h=t_2-t_1.$
		\item Let $i=0$  and $t=0$.
		\item Repeat while $t<T$
		\begin{enumerate}
			\item[] Generate an independent $D_f(t)$ random variables with
			$Q_i\sim D_f(h).$
			\item[]Set	$W(\lceil t/h\rceil+1):=h*i,\ldots,W(\lfloor (t+Q_i)/h\rfloor+1):=h*i$.
			\item[] $i=i+1,~t=t+Q_i.$
			\item[] Next $t$.
		\end{enumerate}
		\item The discretized sample path of $E_f(t)$ at $t_i$ is  $W_{i}, 1\leq i\leq n$ with $W_0=0.$ 
		
	\end{enumerate}
\end{algorithm}
\noindent Note that the simulations for the inverse of gamma subordinator, the inverse of tempered $\alpha$-stable subordinator and the inverse of inverse Gaussian subordinator can be done using the above algorithm by replacing the special case for the subordinator.\\
\noindent We next present a general algorithm to simulate the TCFPP-I, namely the FNBP, the FPP-TSS and the FPP-IGN processes. The same algorithm can be used to simulate the TCFPP-II, namely the FPP-IG, the FPP-ITSS and the FPP-IIGN processes.
\begin{algorithm}[\bf  Simulation of the TCFPP-I and the TCFPP-II]\label{simulation-fnbp}
	\begin{enumerate}[(a)]
		\item[]
		
		\item Fix the parameters for the subordinator (inverse subordinator), under consideration. Choose the fractional index $\beta ~(0<\beta<1)$ and rate parameter $\lambda>0$ for the FPP.
		\item  Fix the time $T$ for the time interval $[0,T]$ and choose $n+1$ uniformly spaced time points $0=t_0,t_1,\ldots,t_n=T$ with $h=t_2-t_1$.
		\item Simulate the values $X(t_i),1\leq i\leq n,$ of the subordinator (inverse subordinator) at $t_1,\ldots t_n,$ using the algorithm for respective subordinator (inverse subordinator).
		\item  Using the values $X(t_i),1\leq i\leq n,$ generated in Step (c), as time points, compute the number of events of the FPP $N_\beta(X(t_i)),1\leq i\leq n,$ using Algorithm \ref{simulation-fpp}.
	\end{enumerate}
\end{algorithm}

\begin{figure}[H]
	\begin{subfigure}{0.5\textwidth}
		\caption{Sample paths of the FNBP for $\beta=0.6,\alpha=3.0,p=4.0$ and $\lambda=1.5.$}
		\includegraphics[width=0.8\linewidth]{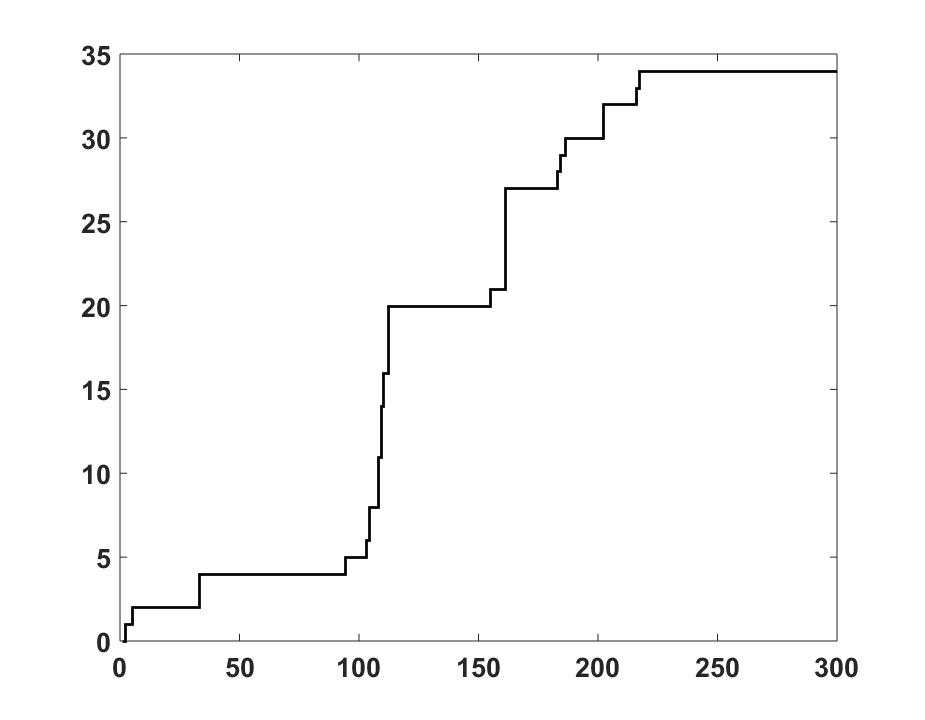}	\end{subfigure}~~
	\begin{subfigure}{0.5\textwidth}
		\caption{Sample paths of the FNBP for $\beta=0.90,\alpha=3.0,p=4.0$ and $\lambda=2.0.$}
		\includegraphics[width=0.8\linewidth]{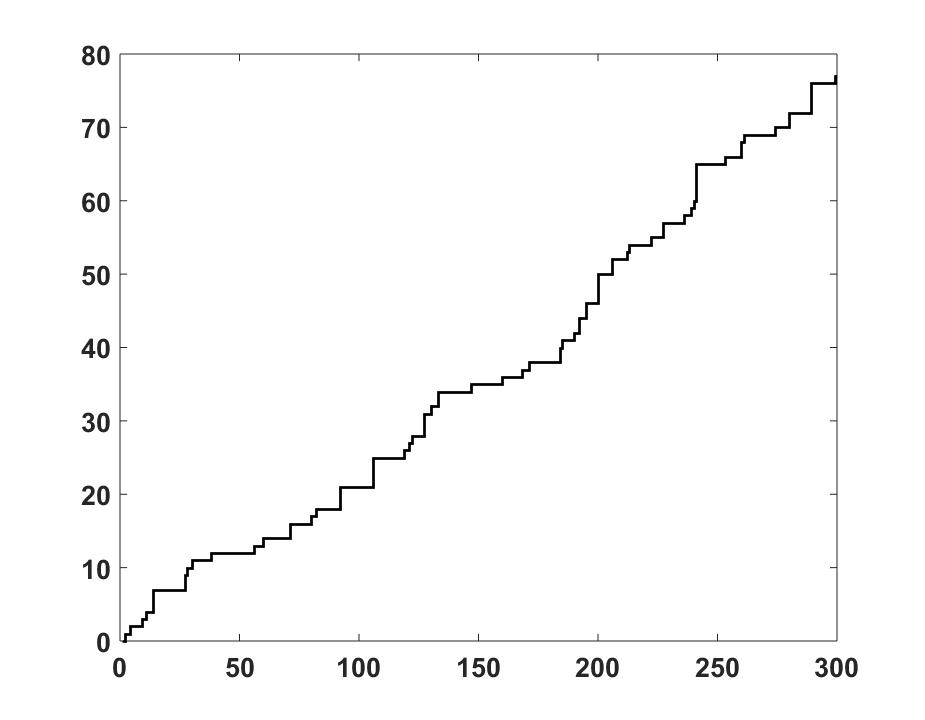}
	\end{subfigure}
	\caption{Sample paths of the FNBP process}
\end{figure}

\begin{figure}[htb]
	\begin{subfigure}{0.5\textwidth}
		\caption{Sample paths of the FPP-IG for $\beta=0.6,\alpha=3.0,p=4.0$ and $\lambda=1.5.$}			\includegraphics[width=0.8\linewidth]{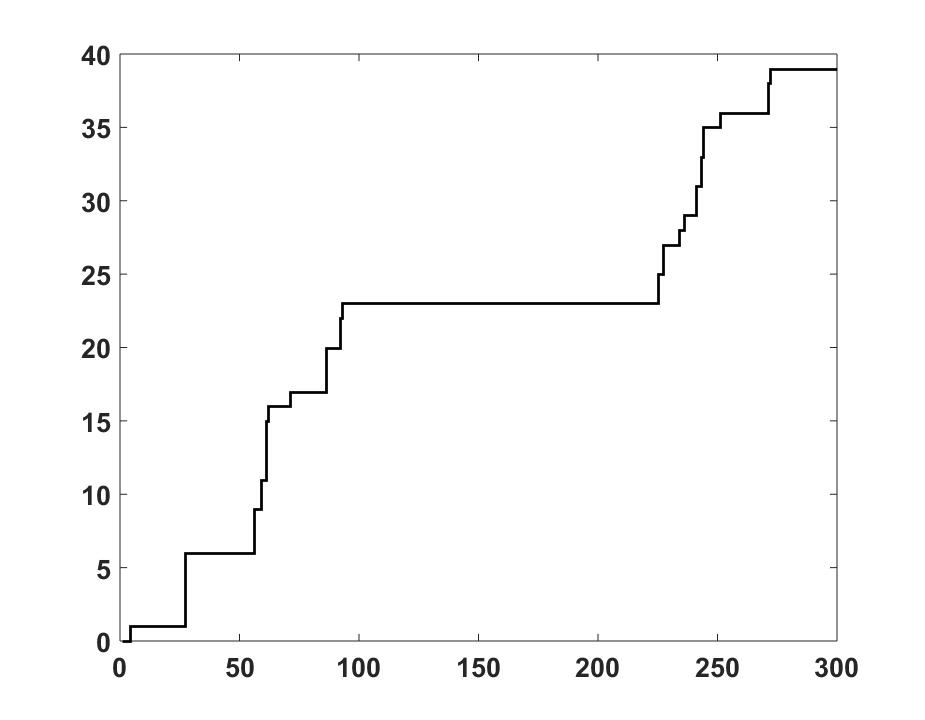}	\end{subfigure}~~
	\begin{subfigure}{0.5\textwidth}
		\caption{Sample paths of the FPP-IG for $\beta=0.90,\alpha=3.0,p=4.0$ and $\lambda=2.0.$}
		\includegraphics[width=0.8\linewidth]{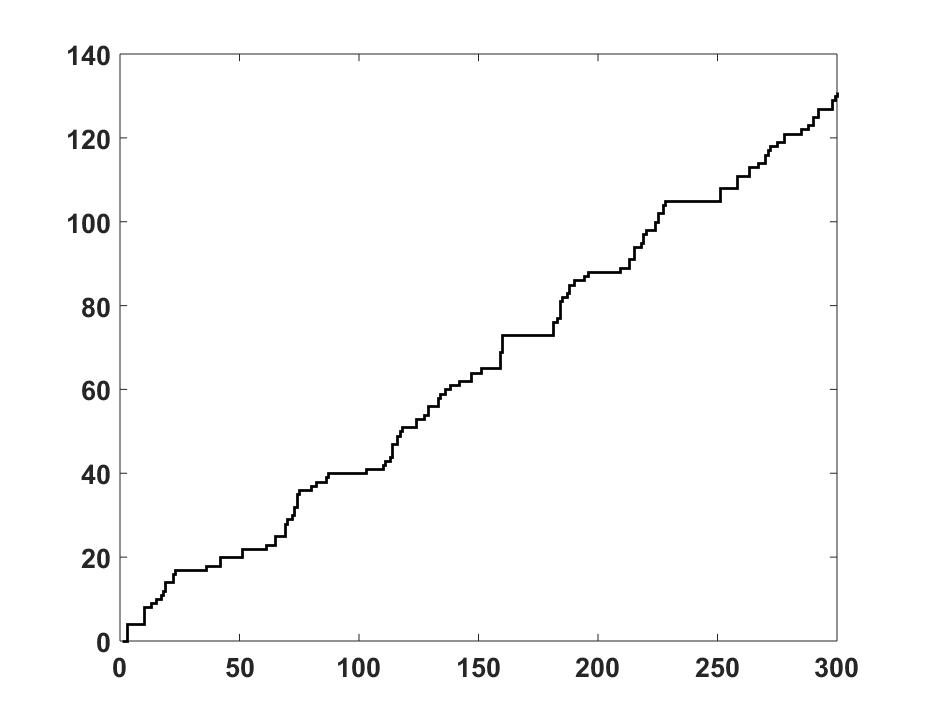}
	\end{subfigure}
	\caption{Sample paths of the FPP-IG process}
\end{figure}

\begin{figure}[htb]
	\begin{subfigure}{0.5\textwidth}
		\caption{Sample paths of the FPP-TSS process for $\beta=0.6,\mu=2.0,\alpha=0.5$ and $\lambda=1.5.$}
		\includegraphics[width=0.8\textwidth]{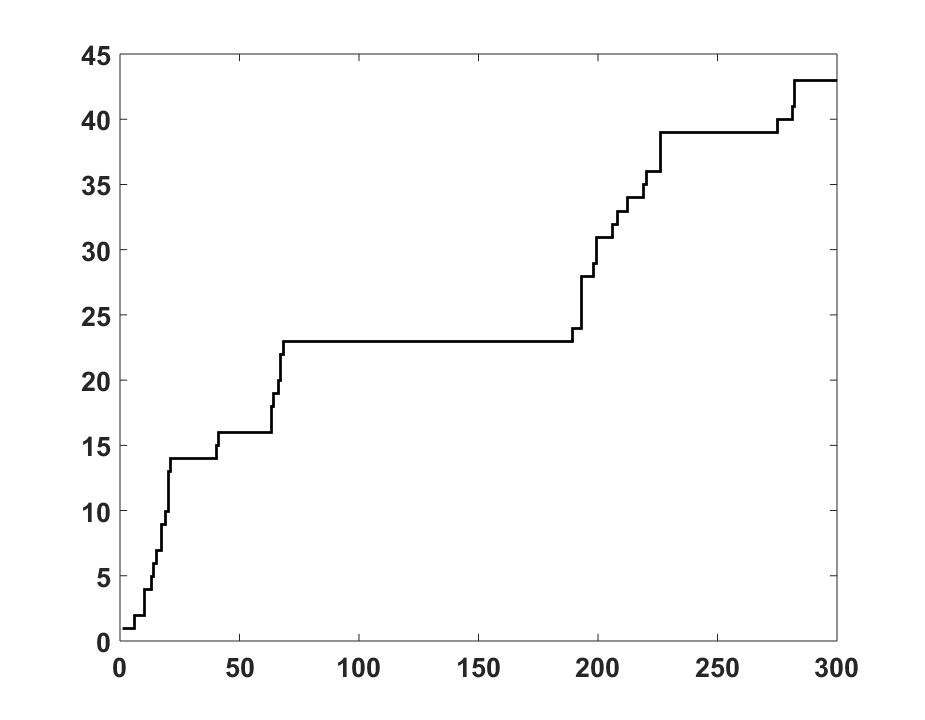}
	\end{subfigure}~
	\begin{subfigure}{0.5\textwidth}
		\caption{Sample paths of the FPP-TSS process for $\beta=0.9,\mu=2.0,\alpha=0.7$ and $\lambda=2.0.$}
		\includegraphics[width=0.8\textwidth]{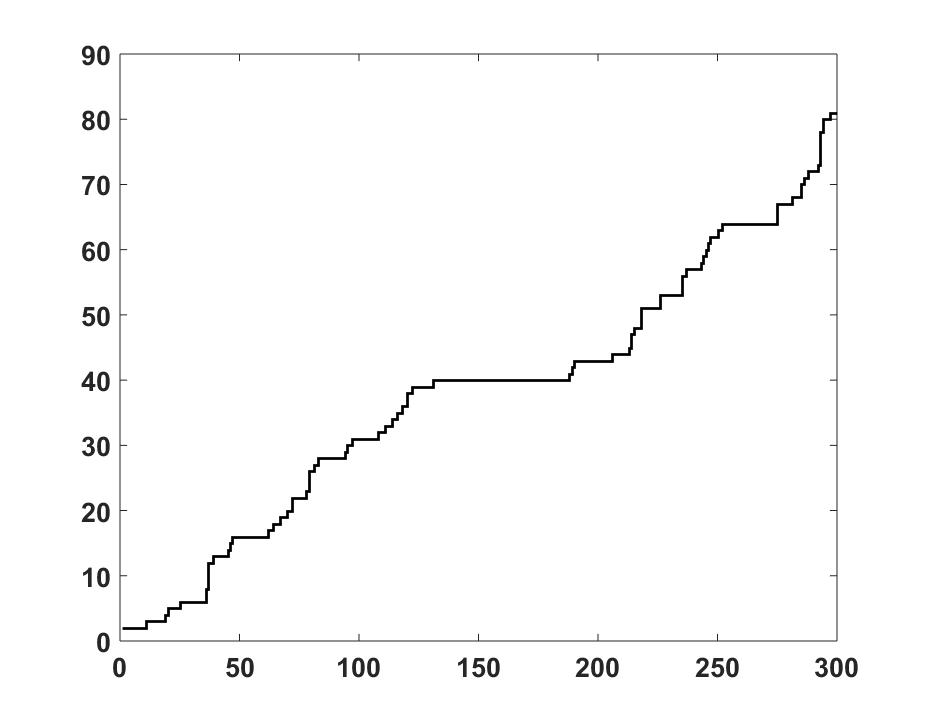}
	\end{subfigure}
	\caption{Sample paths of the FPP-TSS process}
\end{figure}

\begin{figure}[htb]
	\begin{subfigure}{0.5\textwidth}
		\caption{Sample paths of the FPP-ITSS process for $\beta=0.6,\mu=2.0,\alpha=0.5$ and $\lambda=1.5.$}
		\includegraphics[width=0.8\textwidth]{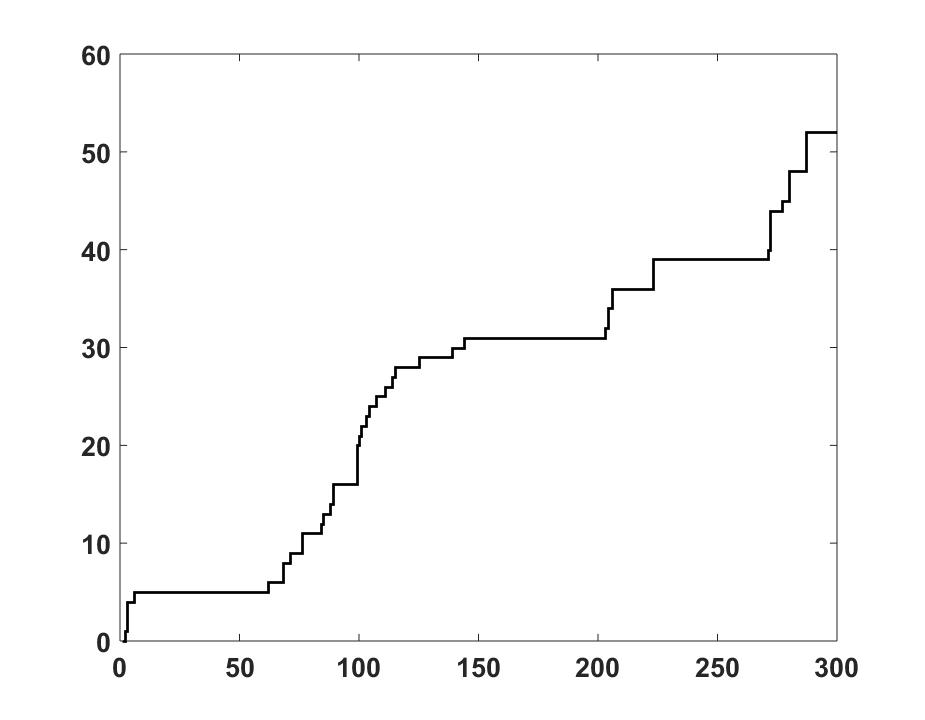}
	\end{subfigure}~
	\begin{subfigure}{0.5\textwidth}
		\caption{Sample paths of the FPP-ITSS process for $\beta=0.9,\mu=2.0,\alpha=0.7$ and $\lambda=2.0.$}
		\includegraphics[width=0.8\textwidth]{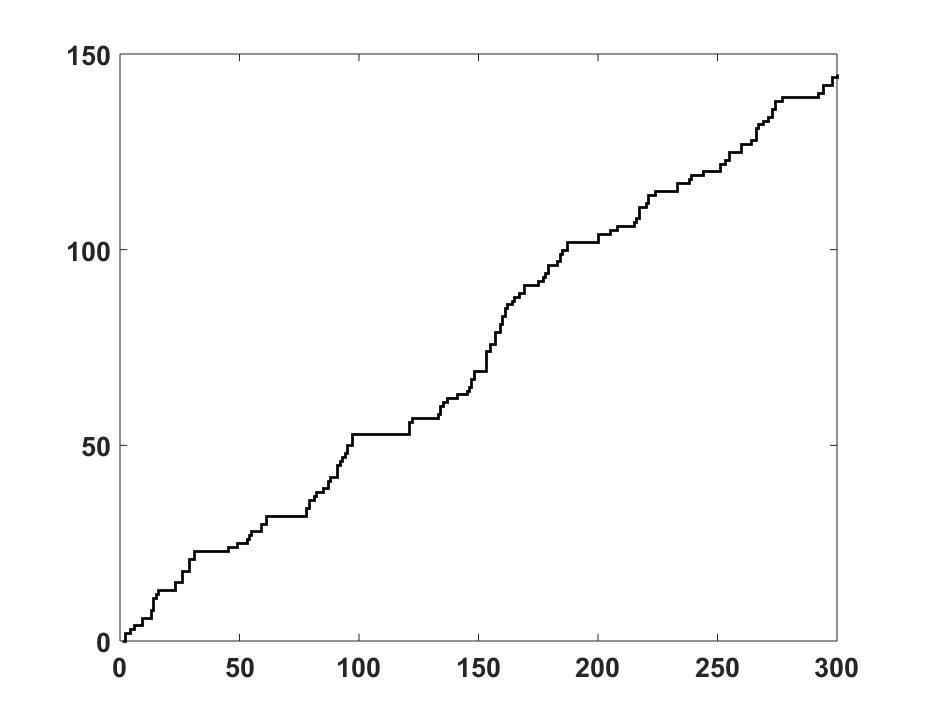}
	\end{subfigure}
	\caption{Sample paths of the FPP-ITSS process}
\end{figure}
\begin{figure}[htb]
	\begin{subfigure}{0.51\textwidth}
		\caption{Sample paths of the FPP-IGN process for $\beta=0.6,\delta=1=\gamma$ and $\lambda=1.5.$}
		\includegraphics[width=0.8\textwidth]{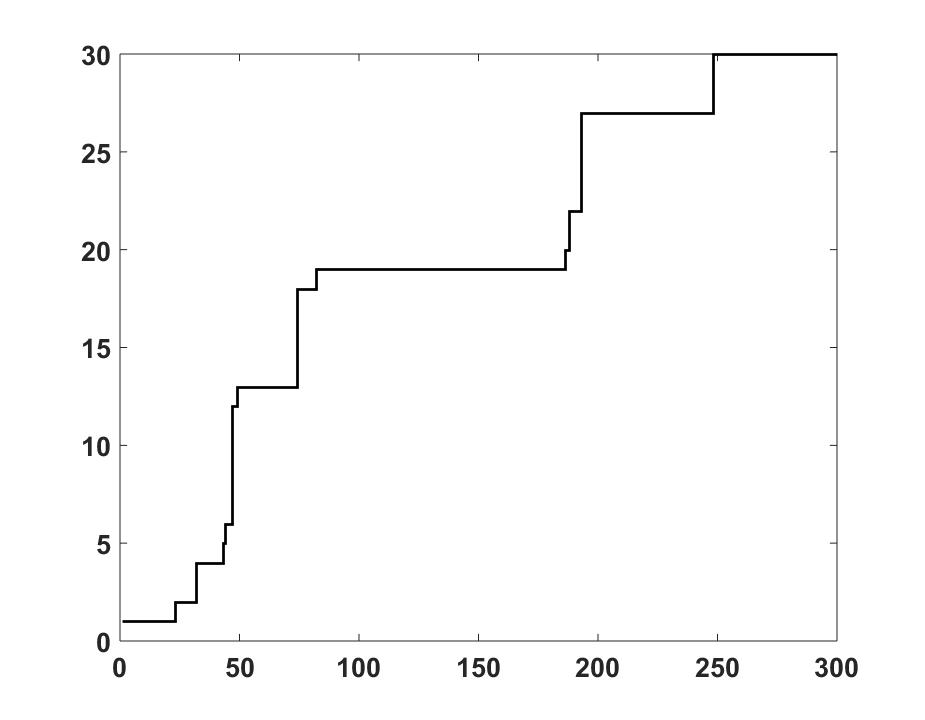}
	\end{subfigure}~
	\begin{subfigure}{0.51\textwidth}
		\caption{Sample paths of the FPP-IGN process for $\beta=0.9,\delta=1=\gamma$ and $\lambda=2.0.$}
		\includegraphics[width=0.8\textwidth]{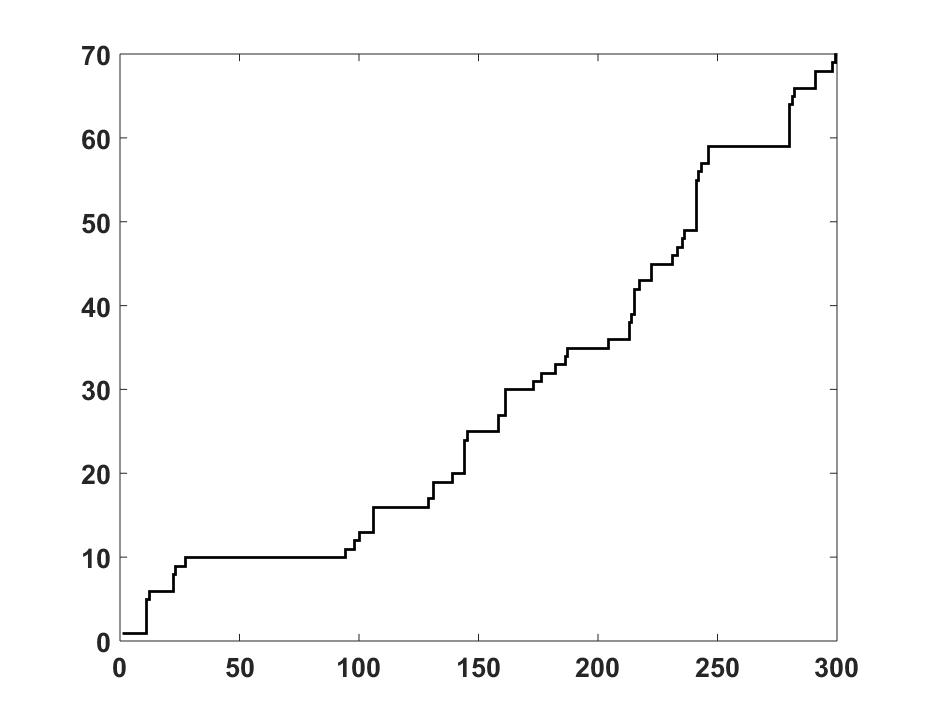}
	\end{subfigure}
	\caption{Sample paths of the FPP-IGN process}
\end{figure}

\begin{figure}[H]
	\begin{subfigure}{0.51\textwidth}
		\caption{Sample paths of the FPP-IIGN process for $\beta=0.6,\delta=1=\gamma$ and $\lambda=1.5.$}
		\includegraphics[width=0.8\textwidth]{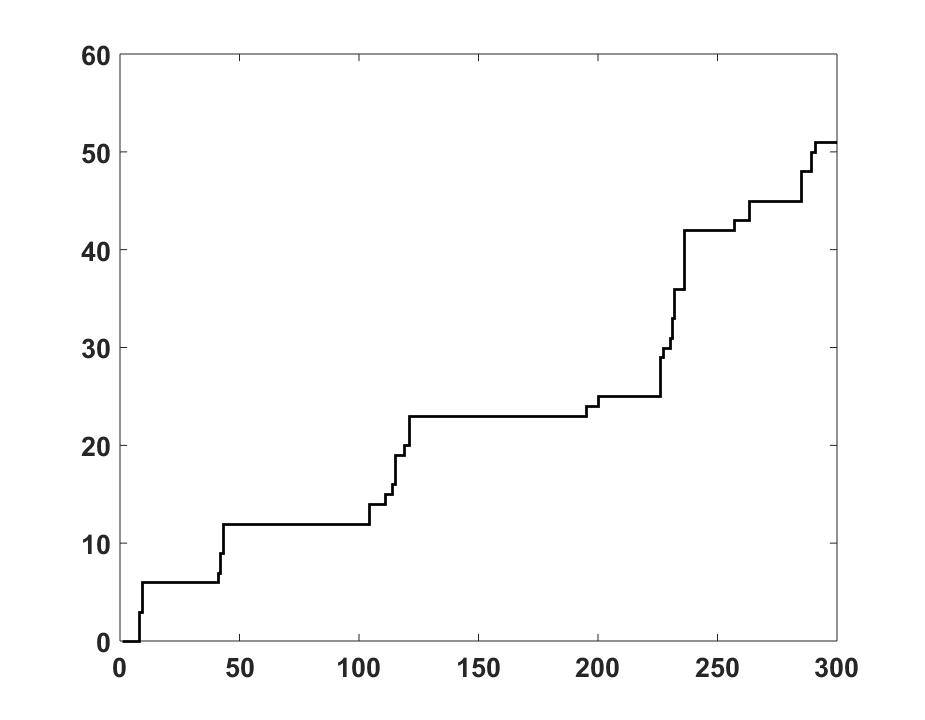}
	\end{subfigure}~
	\begin{subfigure}{0.51\textwidth}
		\caption{Sample paths of the FPP-IIGN process for $\beta=0.9,\delta=1=\gamma$ and $\lambda=2.0.$}
		\includegraphics[width=0.8\textwidth]{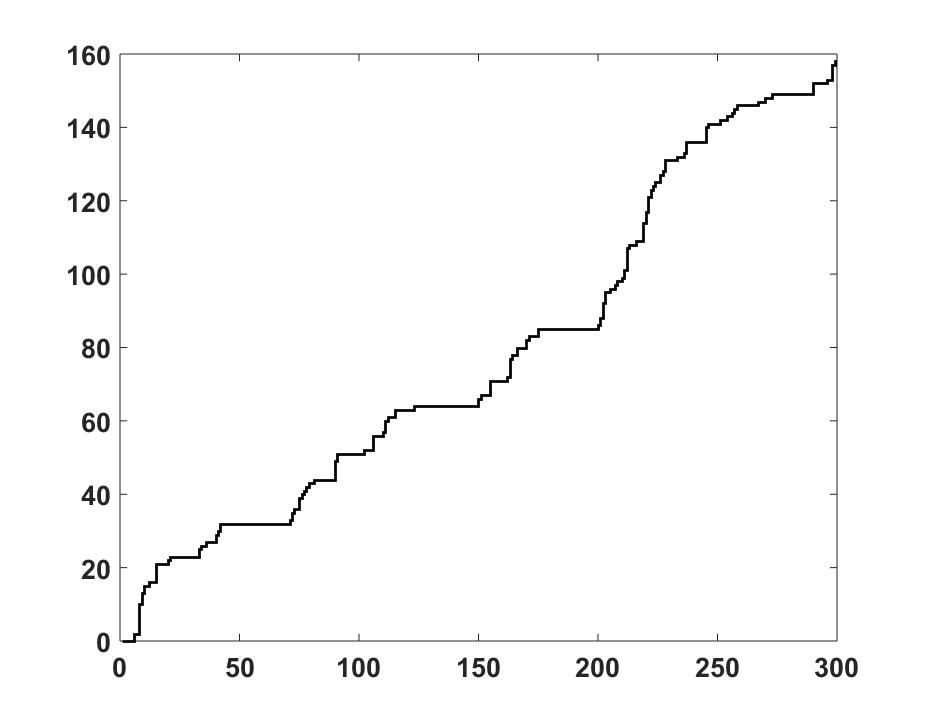}
	\end{subfigure}
	\caption{Sample paths of the FPP-IIGN process}
\end{figure}
\FloatBarrier
\section*{Acknowledgments}
A part of this work was done while the second author was
visiting the Department of Statistics and Probability, Michigan State University, during Summer-2016.

\def\cprime{$'$}
\providecommand{\bysame}{\leavevmode\hbox to3em{\hrulefill}\thinspace}
\providecommand{\MR}{\relax\ifhmode\unskip\space\fi MR }
\providecommand{\MRhref}[2]{%
	\href{http://www.ams.org/mathscinet-getitem?mr=#1}{#2}
}
\providecommand{\href}[2]{#2}

\end{document}